\newtheoremstyle{mystyle}
{11pt}                          
{11pt}                          
{}                                      
{}                                      
{\bfseries}                     
{.}                                     
{5.5pt}                         
{}                                      
\newtheoremstyle{mystyle2}
{11pt}                          
{11pt}                          
{}                                      
{}                                      
{\bfseries}                     
{}                                     
{5.5pt}                         
{}                                      
\theoremstyle{mystyle}
\newtheorem{theorem}{Theorem}
\newtheorem{definition}{Definition}
\newtheorem{lemma}{Lemma}
\newtheorem{proposition}{Proposition}
\newtheorem{corollary}{Corollary}
\theoremstyle{mystyle2}
\newtheorem*{remark}{Remark}
\newtheorem*{remarks}{Remarks}
\renewenvironment{proof}[1][Proof.]{\vspace{-16.5pt} \begin{trivlist}
        \item[\hskip \labelsep {\bfseries #1}]}{\qed \end{trivlist}}
\appto\normalsize{
        \abovedisplayskip=5.5pt plus 2pt minus 2pt
        \belowdisplayskip=5.5pt plus 2pt minus 2pt
        \abovedisplayshortskip=5.5pt plus 2pt minus 2pt
        \belowdisplayshortskip=5.5pt plus 2pt minus 2pt}
\appto\small{
        \abovedisplayskip=5.5pt plus 2pt minus 2pt
        \belowdisplayskip=5.5pt plus 2pt minus 2pt
        \abovedisplayshortskip=5.5pt plus 2pt minus 2pt
        \belowdisplayshortskip=5.5pt plus 2pt minus 2pt}
\newcommand{\gap}{\vspace{11pt}}
\newcommand{\T}{\mathsf{T}}
\newcommand{\tr}{\operatorname{tr}}
\newcommand{\Aut}{\operatorname{Aut}}
\newcommand{\R}{{\cal R}}
\newcommand{\Rn}{{\cal R}^n}
\newcommand{\Sn}{{\cal S}^n}
\newcommand{\Hn}{{\cal H}^n}
\newcommand{\G}{{\cal G}}
\newcommand{\V}{{\cal V}}
\newcommand{\W}{{\cal W}}
\newcommand{\Z}{{\cal Z}}
\newcommand{\ip}[2]{\left< #1,\, #2 \right>}
\newcommand{\mybar}[1]{\mkern 1.5mu \overline{\mkern -1.5mu #1 \mkern -1.5mu} \mkern 1.5mu}
\title{\bf  
Commutation principles in  Euclidean Jordan algebras and normal decomposition systems}
\author{
        M. Seetharama Gowda\\
        Department of Mathematics and Statistics\\
        University of Maryland, Baltimore County\\
        Baltimore, Maryland 21250, USA\\
        gowda@umbc.edu\\[11pt]
        and\\[11pt]
        Juyoung Jeong\\
        Department of Mathematics and Statistics\\
        University of Maryland, Baltimore County\\
        Baltimore, Maryland 21250, USA\\
        juyoung1@umbc.edu
}
\date{\today}
\begin{document}

\maketitle

\begin{abstract}
The commutation principle of Ramirez, Seeger, and Sossa \cite{ramirez-seeger-sossa} proved in the setting of 
Euclidean Jordan algebras says that when the sum of a Fr\'{e}chet differentiable function $\Theta(x)$ and a 
spectral function $F(x)$ is minimized over a spectral set $\Omega$,  any local minimizer $a$ operator commutes with the 
Fr\'{e}chet derivative $\Theta^{\prime}(a)$. In this  paper, we 
 extend this result to  sets and functions which are (just) 
 invariant under algebra automorphisms. 
We also consider a 
 similar principle in the setting of normal decomposition systems.
\end{abstract}

\vspace{1cm}
\noindent{\bf Key Words:} Euclidean Jordan algebra,  (weakly) spectral sets/functions,  automorphisms,  
 commutation principle, normal decomposition system, variational inequality problem, cone complementarity problem.
\\

\noindent{\bf AMS Subject Classification:}  17C20, 17C30, 52A41, 90C25.
\newpage

\section{Introduction}
      Let $\V$ be a Euclidean Jordan algebra of rank $n$ \cite{faraut-koranyi} and $\lambda:\V\rightarrow \Rn$ denote the eigenvalue map (which takes $x$ to $\lambda(x)$, the vector of eigenvalues of $x$ with entries written in the decreasing order). A  set $\Omega$ in $\V$ is said to be a {\it spectral set} \cite{baes} if it is of the form
$ \Omega = \lambda^{-1}(Q), $
where $Q$ is a permutation invariant set in $\Rn$.
A function $F:\V\rightarrow \R$ is said to be a {\it spectral function} \cite{baes} if it is of the form
$F = f \circ \lambda, $
where $f : \Rn \rightarrow \R$ is a permutation invariant function. 

\gap

Extending an earlier result of Iusem and Seeger \cite{iusem-seeger} for 
real symmetric matrices, Ramirez, Seeger, and Sossa \cite{ramirez-seeger-sossa} prove the following commutation principle.

\begin{theorem}\label{ramirez theorem}
Let $\V$ be a Euclidean Jordan algebra,  $\Omega$ be a spectral set in $\V$, and $F : \V \rightarrow \R$ be 
a spectral function. 
Let  $\Theta:\V\rightarrow R$ be  Fr{\'e}chet differentiable. If $a$ is a local minimizer
 of
        \begin{equation}
                \min_{\Omega}\, \Theta(x) + F(x),
        \end{equation}
        then $a$ and $\Theta^{\prime}(a)$ operator commute in $\V$.
\end{theorem}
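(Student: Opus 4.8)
The plan is to read the hypotheses as \emph{invariance} hypotheses under the automorphism group $\Aut(\V)$, and then to notice that the stationarity condition these invariances force is literally the statement ``$a$ and $\Theta'(a)$ operator commute''.

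\emph{Step 1 (two standard ingredients).} For $x,y\in\V$ put $D_{x,y}:=[L_x,L_y]=L_xL_y-L_yL_x$. I would recall that: (i) each $D_{x,y}$ is a derivation of $\V$ (a consequence of the linearized Jordan identity), and, since the trace inner product is associative, $\langle u\circ v,\,w\rangle=\langle u,\,v\circ w\rangle$, every $L_x$ is self-adjoint, so $D_{x,y}$ is skew-adjoint; hence $\phi_t:=\exp(tD_{x,y})$ is, for every $t\in\R$, an orthogonal algebra automorphism of $\V$. And (ii) any $\phi\in\Aut(\V)$ satisfies $\lambda(\phi(z))=\lambda(z)$ (automorphisms carry Jordan frames to Jordan frames), so $\phi(\Omega)=\Omega$ for the spectral set $\Omega=\lambda^{-1}(Q)$ and $F\circ\phi=F$ for the spectral function $F=f\circ\lambda$.

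\emph{Step 2 (the first-order condition).} Fix $x,y\in\V$, set $D:=D_{x,y}$, and consider the curve $t\mapsto\phi_t(a)$. By Step~1 it lies in $\Omega$ and $F(\phi_t(a))=F(a)$ for all $t$. Since $a$ is a local minimizer of $\Theta+F$ over $\Omega$ and $t\mapsto\phi_t(a)$ is continuous with $\phi_0(a)=a$, for $|t|$ small we get $\Theta(\phi_t(a))=\bigl(\Theta(\phi_t(a))+F(\phi_t(a))\bigr)-F(a)\ge\bigl(\Theta(a)+F(a)\bigr)-F(a)=\Theta(a)$. The map $t\mapsto\phi_t(a)$ is smooth with derivative $D(a)$ at $t=0$ and $\Theta$ is Fr\'echet differentiable, so $t\mapsto\Theta(\phi_t(a))$ is differentiable at $0$ and attains a local minimum there; hence $0=\frac{d}{dt}\Theta(\phi_t(a))\big|_{t=0}=\langle\Theta'(a),\,D(a)\rangle$. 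Writing $b:=\Theta'(a)$, this says $\langle b,\,[L_x,L_y](a)\rangle=0$ for all $x,y\in\V$.

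\emph{Step 3 (the crux: this is operator commutativity).} Using commutativity of $\circ$ and self-adjointness of $L_a,L_b$, for all $x,y\in\V$
\[
\langle b,\,[L_x,L_y](a)\rangle=\langle b,\,x\circ(y\circ a)\rangle-\langle b,\,y\circ(x\circ a)\rangle=\langle b\circ x,\,a\circ y\rangle-\langle b\circ y,\,a\circ x\rangle=\langle [L_a,L_b]x,\,y\rangle .
\]
Therefore the conclusion of Step~2 is equivalent to $[L_a,L_b]=0$, i.e. $L_aL_b=L_bL_a$, which is exactly the assertion that $a$ and $b=\Theta'(a)$ operator commute in $\V$.

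\emph{Where the difficulty lies.} Granting Steps~1 and~2, Step~3 is a two-line identity, so the whole weight of the argument rests on Step~1 --- that inner derivations $D_{x,y}$ exponentiate to automorphisms and that automorphisms fix spectral sets and spectral functions. It is worth emphasizing that the proof uses only the special curves $\exp(tD_{x,y})a$, with no isospectral-manifold or Lagrange-multiplier machinery: the family $\{D_{x,y}\}_{x,y\in\V}$ already supplies enough directions precisely because the identity in Step~3 is an exact equality, not merely an inclusion of tangent spaces. This is also the feature from which the announced extension to merely automorphism-invariant (``weakly spectral'') data will follow without change.
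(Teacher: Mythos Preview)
Your proof is correct and is essentially identical to the paper's own argument: the paper proves the more general Theorem~\ref{gowda-jeong theorem} (weakly spectral data) by exactly the same three steps---exponentiate the inner derivation $D=L_uL_v-L_vL_u$ to a one-parameter family of automorphisms, use invariance of $\Omega$ and $F$ to reduce to $\Theta(a)\le\Theta(e^{tD}a)$ and differentiate to get $\langle\Theta'(a),Da\rangle=0$, then use self-adjointness of $L_u,L_v$ to rewrite this as $\langle(L_bL_a-L_aL_b)u,v\rangle=0$---and deduces Theorem~\ref{ramirez theorem} as the special case where spectral sets/functions are observed to be automorphism-invariant. Your closing remark that the proof goes through verbatim for merely automorphism-invariant data is precisely the paper's point.
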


A number of important and interesting applications are mentioned in \cite{ramirez-seeger-sossa}. 
The proof of the above result in \cite{ramirez-seeger-sossa} is somewhat intricate, deep, and long. In our paper we 
extend the above result by assuming only the automorphism invariance of $\Omega$ and $F$, and at the same time provide    (perhaps) a simpler and shorter 
proof. To elaborate, recall that an (algebra) automorphism on $\V$ is an invertible linear transformation 
on $\V$ that preserves the Jordan product. It is known (see \cite{jeong-gowda}, Theorem 2) that spectral sets and functions are invariant under automorphisms, but the converse may fail unless the algebra is either $\Rn$ or simple. 
By defining {\it weakly spectral sets/functions} as those having this automorphism invariance property, we extend the above result of Ramirez, Seeger, and Sossa  as follows. 

\begin{theorem}\label{gowda-jeong theorem}
Let $\V$ be a Euclidean Jordan algebra  and suppose that $\Omega$ in $\V$ and $F : \V \rightarrow \R$ are weakly spectral. 
Let  $\Theta:\V\rightarrow R$ be  Fr{\'e}chet differentiable. If $a$ is a local minimizer
 of
        \begin{equation} \label{eq: Minimization in EJA}
                \min_{\Omega}\, \Theta(x) + F(x).
        \end{equation}
        then $a$ and $\Theta^{\prime}(a)$ operator commute in $\V$.
\end{theorem}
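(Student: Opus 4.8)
The plan is to reduce the problem to the local structure around the minimizer $a$ by exploiting the automorphism invariance of $\Omega$ and $F$ directly. First I would fix a local minimizer $a$ of \eqref{eq: Minimization in EJA} and consider an arbitrary automorphism $\phi \in \Aut(\V)$. Since $\Omega$ is weakly spectral, $\phi(\Omega) = \Omega$, and since $F$ is weakly spectral, $F(\phi(x)) = F(x)$ for all $x$; consequently the objective $x \mapsto \Theta(x) + F(x)$ evaluated along the curve $t \mapsto \phi_t(a)$, for any one-parameter subgroup $\phi_t = e^{tD}$ of $\Aut(\V)$, stays inside $\Omega$ and has $F$-part equal to $F(a)$. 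Thus the function $t \mapsto \Theta(e^{tD}a) + F(a)$ has a local minimum at $t = 0$, which forces $\frac{d}{dt}\big|_{t=0}\Theta(e^{tD}a) = \ip{\Theta'(a)}{Da} = 0$ for every $D$ in the Lie algebra $\operatorname{Der}(\V)$ of derivations of $\V$ (the tangent space to $\Aut(\V)$ at the identity).

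The next step is to translate the family of identities $\ip{\Theta'(a)}{Da} = 0$, valid for all derivations $D$, into the operator-commutativity conclusion. Writing $b := \Theta'(a)$, the condition says $\ip{b}{Da} = 0$ for all $D \in \operatorname{Der}(\V)$; since each derivation is skew-adjoint with respect to the inner product, this is equivalent to $\ip{Db}{a} = 0$, i.e. the bilinear form $(x,y)\mapsto \ip{Dx}{y}$ vanishes at $(b,a)$ for every $D$. Here I would invoke the key algebraic fact that the derivations of a Euclidean Jordan algebra are exactly the linear span of the operators $[L_x, L_y] = L_xL_y - L_yL_x$ (commutators of multiplication operators), a standard result from the Faraut--Kor\'anyi theory \cite{faraut-koranyi}. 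Hence the identities become $\ip{[L_x,L_y]b}{a} = 0$ for all $x, y \in \V$. Using self-adjointness of the $L_x$ and the associativity of the trace inner product, $\ip{[L_x,L_y]b}{a} = \ip{L_y b}{L_x a} - \ip{L_x b}{L_y a} = \ip{y}{b\circ(x\circ a)} - \ip{x}{b\circ(y\circ a)}$ — more to the point, this quantity equals $\ip{x\circ b}{y\circ a} - \ip{y\circ b}{x\circ a}$ after rearrangement, which is (up to sign) the inner product of $y$ with the associator-type element measuring the failure of $a$ and $b$ to associate. The cleanest route is: $\ip{[L_x,L_y]b}{a} = \ip{[L_a, L_b]x}{y}$ (a known identity, or a short computation using self-adjointness and the Jordan identity), so the vanishing for all $x,y$ is equivalent to $[L_a, L_b] = 0$, i.e. $L_a L_b = L_b L_a$, which is precisely the statement that $a$ and $b = \Theta'(a)$ operator commute.

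The main obstacle I anticipate is the passage in the second paragraph — specifically, making sure that the span of the derivations is ``large enough'' to capture operator commutativity, and correctly identifying the algebraic identity that turns $\ip{b}{Da}=0$ for all derivations $D$ into $[L_a,L_{\Theta'(a)}]=0$. One must be careful that operator commutativity of $a$ and $b$ (equivalently, $L_aL_b = L_bL_a$, equivalently $a$ and $b$ lie in a common maximal associative subalgebra, equivalently they have a simultaneous Jordan frame decomposition) is exactly equivalent to the vanishing of the bilinear form $(x,y)\mapsto \ip{[L_x,L_y]\,b}{a}$; this uses that $\operatorname{Der}(\V) = \operatorname{span}\{[L_x,L_y] : x,y \in \V\}$ together with the identity $\tr\big([L_x,L_y][L_a,L_b]\big)$-type manipulations, and is where the one genuinely algebraic lemma is needed. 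A secondary technical point is justifying the differentiation of $t \mapsto \Theta(e^{tD}a)$ at $t=0$, which is immediate from Fr\'echet differentiability of $\Theta$ and smoothness of the exponential, and noting that the local minimality of $a$ in $\Omega$ indeed restricts to local minimality along the curve $t\mapsto e^{tD}a$ since that curve lies in $\Omega$ and passes through $a$ at $t=0$. Once these are in place, no spectral-set machinery or lengthy case analysis is required, which is the promised simplification over \cite{ramirez-seeger-sossa}.
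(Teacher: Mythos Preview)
Your proposal is correct and follows essentially the same route as the paper's proof: exponentiate a derivation to get a one-parameter group of automorphisms, use the invariance of $\Omega$ and $F$ to reduce to $\ip{\Theta'(a)}{Da}=0$, and then unwind this via self-adjointness of the $L_x$'s to conclude $[L_a,L_b]=0$. The only cosmetic difference is that the paper works directly with the specific derivations $D=[L_u,L_v]$ from the start, so it never needs the (true but unnecessary) fact that $\operatorname{Der}(\V)$ is spanned by such commutators; also the ``Jordan identity'' plays no role in the final rearrangement---self-adjointness of each $L_x$ alone gives $\ip{b}{[L_u,L_v]a}=\ip{(L_bL_a-L_aL_b)u}{v}$.
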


By noting that an Euclidean Jordan algebra is an inner product space and the corresponding automorphism group is a subgroup of the orthogonal group (at least when the algebra is equipped with the canonical inner product), we state a similar result
in the setting of a normal decomposition system. Such a system was introduced by Lewis \cite{lewis} to unify various results in convex analysis on  matrices. A  normal decomposition system is a triple  $(X,\G,\gamma)$ where $X$ is a real inner product space, $\G$ is a (closed) subgroup of the orthogonal group of $X$, and $\gamma:X\rightarrow X$ is a mapping that has properties similar to those of $\lambda(x)$, see Section 4. Our commutation principle on such a system is as follows. 

\begin{theorem}\label{normal decomposition system theorem}
        Let $(X,\G,\gamma)$ be a normal decomposition system. Let $\Omega$ be a convex $\G$-invariant set in $X$, $F : X \rightarrow \R$ be a convex $\G$-invariant function, and $\Theta : X \rightarrow \R$ be Fr{\'e}chet differentiable. Suppose that
        $a$ is a  local minimizer of
        \begin{equation} \label{eq: Minimization in NDS}
                \min_{\Omega}\, \Theta(x) + F(x).
        \end{equation}
        Then $-\Theta^{\prime}(a)$ and $a$ commute in $(X,\, \G,\, \gamma)$.
\end{theorem}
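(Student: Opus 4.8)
The plan is to reduce Theorem~\ref{normal decomposition system theorem} to a single subdifferential inclusion and then squeeze the conclusion out of $\G$-invariance together with the basic inequality $\ip{x}{x'}\le\ip{\gamma(x)}{\gamma(x')}$ that characterizes a normal decomposition system. First I would introduce $f:=F+\delta_\Omega$, the sum of $F$ with the indicator function of $\Omega$ (equal to $0$ on $\Omega$ and $+\infty$ off it). Since $\Omega$ and $F$ are convex and $\G$-invariant, $f$ is a proper convex $\G$-invariant function; note $f(Ax)=F(Ax)+\delta_\Omega(Ax)=F(x)+\delta_\Omega(x)=f(x)$ using $A^{-1}\Omega=\Omega$. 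The two facts I would establish are: (i)~$-\Theta^{\prime}(a)\in\partial f(a)$; and (ii)~any subgradient of a $\G$-invariant convex function at a point commutes (in the system) with that point. Applying (ii) to the subgradient $-\Theta^{\prime}(a)$ of $f$ at $a$ then gives exactly the assertion.

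For step (i), I would pick a neighborhood $N$ of $a$ on which $\Theta(x)+F(x)\ge\Theta(a)+F(a)$ for $x\in N\cap\Omega$, fix $d\in X$ with $a+d\in\Omega$ (the subgradient inequality below being vacuous when $a+d\notin\Omega$, since then $f(a+d)=+\infty$), observe $a+td=(1-t)a+t(a+d)\in N\cap\Omega$ for all small $t>0$ by convexity of $\Omega$, bound $F(a+td)\le(1-t)F(a)+tF(a+d)$ by convexity of $F$, substitute into the minimality inequality, divide by $t$, and let $t\downarrow 0$ using Fr\'echet differentiability of $\Theta$ at $a$. This yields $F(a+d)\ge F(a)+\ip{-\Theta^{\prime}(a)}{d}$, hence $f(z)\ge f(a)+\ip{-\Theta^{\prime}(a)}{z-a}$ for all $z\in X$, i.e. $-\Theta^{\prime}(a)\in\partial f(a)$. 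This is the \emph{only} place convexity of $\Omega$ and $F$ enters: it is what promotes local minimality to a global subgradient inequality, and it is precisely the feature that is not available in the Euclidean Jordan algebra argument of Theorem~\ref{gowda-jeong theorem}.

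For step (ii), write $y:=-\Theta^{\prime}(a)$. For every $A\in\G$ we have $Aa\in\Omega$ and $f(Aa)=f(a)$, so the subgradient inequality evaluated at $z=Aa$ reads $0=f(Aa)-f(a)\ge\ip{y}{Aa-a}$, that is $\ip{y}{Aa}\le\ip{y}{a}$; taking $A$ to be the identity for the reverse bound, $\ip{y}{a}=\max_{A\in\G}\ip{y}{Aa}$. Now I would invoke the fundamental max-formula of a normal decomposition system, $\max_{A\in\G}\ip{x}{Ax'}=\ip{\gamma(x)}{\gamma(x')}$ (the maximum being attained, since $\G$ is a closed subgroup of the orthogonal group and $\gamma(x)\in\G x$), to get $\ip{y}{a}=\ip{\gamma(y)}{\gamma(a)}$. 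This is the equality case of the basic inequality $\ip{x}{x'}\le\ip{\gamma(x)}{\gamma(x')}$ of the system, which by definition means that $y=-\Theta^{\prime}(a)$ and $a$ commute in $(X,\G,\gamma)$, as required.

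I do not expect a serious obstacle here; what there is is a matter of having the right ``toolbox'' from Section~4 in hand, namely the max-formula $\max_{A\in\G}\ip{x}{Ax'}=\ip{\gamma(x)}{\gamma(x')}$ and the characterization of commuting pairs as the equality case of $\ip{x}{x'}\le\ip{\gamma(x)}{\gamma(x')}$. The one point that needs a little care is that $\Theta$ is assumed merely Fr\'echet differentiable (not $C^1$, and not convex), so in step~(i) I must argue via the one-sided limit of the difference quotient of $\Theta$ along feasible directions rather than appeal to a gradient inequality or to a calculus rule for subdifferentials; once that is done, the rest is a two-line manipulation of the subgradient inequality over the orbit $\G a$.
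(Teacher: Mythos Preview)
Your proof is correct and follows essentially the same route as the paper's: both arguments use convexity of $\Omega$ and $F$ to pass from local minimality along the segment $[a,Aa]$ to the inequality $\ip{-\Theta'(a)}{Aa}\le\ip{-\Theta'(a)}{a}$ for every $A\in\G$, and then invoke Lewis's max-formula and its equality case (Proposition~\ref{prop: lewis}) to conclude commutativity. Your only cosmetic difference is that you package the first step as the subgradient inclusion $-\Theta'(a)\in\partial(F+\delta_\Omega)(a)$ and isolate ``subgradients of $\G$-invariant convex functions commute with the base point'' as a standalone lemma, whereas the paper carries out the same computation directly with the specific feasible directions $Aa-a$; the underlying mathematics is identical.
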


The organization of our paper is as follows. We cover some preliminary material in Section 2.  In Section 3, we define weakly spectral sets and present a proof of Theorem \ref{gowda-jeong theorem}. In Section 4, we  describe normal decomposition systems and present a proof of Theorem \ref{normal decomposition system theorem}.
In the Appendix, we state a structure theorem for the automorphism group of a Euclidean Jordan algebra and show  that 
weakly spectral sets and spectral sets coincide only in an essentially simple algebra.  
\section{Preliminaries}
\subsection{Euclidean Jordan algebras}
{\it Throughout this paper,  $\V$ denotes  a Euclidean Jordan algebra} \cite{faraut-koranyi}. For $x,y\in \V$, we denote their
 inner product  by $\langle x,y\rangle$ and Jordan product  by
$x \circ y$. We let $e$ denote the unit element in $\V$ and
$\V_+:=\{x\circ x:x\in \V\}$
denote the corresponding {\it symmetric cone}. If $\V_1$ and $\V_2$ are two Euclidean Jordan algebras, then, $\V_1\times \V_2$ becomes a Euclidean Jordan algebra under the Jordan  and inner products, defined, respectively by
$(x_1,x_2)\circ (y_1,y_2)=\Big ( x_1\circ y_1,x_2\circ y_2\Big )\quad \mbox{and}\quad \langle (x_1,x_2), (y_1,y_2)\rangle=\langle x_1,y_1\rangle+\langle x_2,y_2\rangle.$
A similar definition is made for a product of several Euclidean Jordan algebras.
Recall that a  Euclidean Jordan algebra $\V$ is  {\it simple} if it is not a direct sum/product of  nonzero Euclidean Jordan algebras (or equivalently, if it does not contain any non-trivial ideal). It is known, see \cite{faraut-koranyi}, that any nonzero Euclidean Jordan algebra is, in a unique way, a direct sum/product of simple Euclidean Jordan algebras. Moreover, every simple algebra is isomorphic to one of the following five algebras:

\begin{itemize}
        \item[$(i)$] the algebra $\mathcal{S}^n$ of $n \times n$ real symmetric matrices,
        \item[$(ii)$] the algebra $\mathcal{H}^n$ of $n \times n$ complex Hermitian matrices,
        \item[$(iii)$] the algebra $\mathcal{Q}^n$ of $n \times n$ quaternion Hermitian matrices,
        \item[$(iv)$] the algebra $\mathcal{O}^3$ of $3 \times 3$ octonian Hermitian matrices,
        \item[$(v)$] the Jordan spin algebra $\mathcal{L}^n$ for $n \geq 3$.
\end{itemize}

\gap

We say that $\V$ is {\it essentially simple} if it is either $ \mathcal{R}^n$ or simple.

An element $c \in \V$ is an {\it idempotent} if $c^2 = c$; it is a {\it primitive idempotent} if it is nonzero and cannot be written as a sum of two nonzero idempotents. We say a finite set $\{ e_{1},\, e_{2},\, \ldots,\, e_{n} \}$ of primitive idempotents in $\V$ is a {\it Jordan frame} if
$$ e_{i} \circ e_{j} = 0 \ \mathrm{if} \ i \neq j \quad \mathrm{and} \quad \sum_{i=1}^{n} {e_{i}}=e. $$
It  turns out that the number of elements in any Jordan frame is the same; this common number is called the {\it rank} of $\V$.

\gap

\begin{proposition}[{\it Spectral decomposition theorem} \cite{faraut-koranyi}] \label{prop: spectral decomposition}
        Suppose $\V$ is a Euclidean Jordan algebra of rank $n$. Then, for every $x \in \V$, there exist uniquely determined real numbers $\lambda_1(x),\, \ldots,\, \lambda_n(x)$ (called the {\it eigenvalues} of $x$) and a Jordan frame $\{e_1,\, \ldots,\, e_n\}$ such that
$$x = \lambda_1(x)e_1 + \cdots +\lambda_n(x) e_n.$$
Conversely, given any Jordan frame $\{e_1,\, \ldots,\, e_n\}$ and real numbers $\lambda_1,\lambda_2,\ldots, \lambda_n$, the sum $\lambda_1e_1+\lambda_2e_2+\cdots+\lambda_ne_n$ defines an element of $\V$ whose eigenvalues are $\lambda_1,\lambda_2,\ldots, \lambda_n$.
\end{proposition}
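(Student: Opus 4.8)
The plan is to reduce the entire statement to the structure of the unital subalgebra $\mathcal{A}:=\R[x]$ generated by a single element $x\in\V$, together with the routine bookkeeping of decomposing an idempotent into primitive ones. Write $L(y)$ for the multiplication operator $z\mapsto y\circ z$ on $\V$. The algebra $\mathcal{A}$ is commutative and associative (power-associativity), it carries the positive-definite inner product inherited from $\V$, and for every $y\in\mathcal{A}$ the operator $L(y)$ leaves $\mathcal{A}$ invariant, is self-adjoint on $\mathcal{A}$ (because $\langle y\circ z,w\rangle=\langle z,y\circ w\rangle$), and commutes with all the others. So the first move is to apply the finite-dimensional spectral theorem to the single self-adjoint operator $L(x)|_{\mathcal{A}}$: it has distinct real eigenvalues $\mu_1>\cdots>\mu_k$, with orthogonal spectral projections $P_1,\dots,P_k$ summing to $\mathrm{id}_{\mathcal{A}}$.

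Next I would convert this into idempotents of $\V$. Since $L(x)^mz=x^m\circ z$ for $z\in\mathcal{A}$ (associativity), each $P_j$ equals $q_j(L(x))|_{\mathcal{A}}=L(q_j(x))|_{\mathcal{A}}$ for the Lagrange polynomial $q_j$ with $q_j(\mu_\ell)=\delta_{j\ell}$; hence, putting $f_j:=q_j(x)=P_j(e)\in\mathcal{A}$ (nonzero since $P_j\neq 0$), the relations $P_jP_\ell=\delta_{j\ell}P_j$ and $\sum_jP_j=\mathrm{id}_{\mathcal{A}}$, evaluated at $e$, give $f_j\circ f_\ell=\delta_{j\ell}f_j$ and $\sum_jf_j=e$, and applying $L(x)$ to the latter gives $x=\sum_j\mu_jf_j$ with the $\mu_j$ distinct. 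Then I would refine: a nonzero idempotent that is not primitive is a sum $c_1+c_2$ of two nonzero idempotents, which are automatically orthogonal (expand $(c_1+c_2)^2=c_1+c_2$); since any pairwise-orthogonal family of nonzero idempotents is linearly independent (apply $L(c_k)$ to a dependence relation), iterating this terminates and writes each $f_j$ as a sum of pairwise-orthogonal primitive idempotents, while primitives arising from $f_j$ and $f_\ell$ with $j\neq\ell$ are orthogonal because $f_j\circ f_\ell=0$ puts one into the Peirce $1$-space and the other into the Peirce $0$-space of $f_j$ (see \cite{faraut-koranyi}). Collecting them all gives a Jordan frame $\{e_1,\dots,e_n\}$ --- of cardinality the rank $n$, by the fact recalled above --- with $x=\sum_{i=1}^n\lambda_ie_i$ for suitable reals; reordering so that $\lambda_1\ge\cdots\ge\lambda_n$ gives the asserted form.

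For uniqueness of this ordered list I would exploit the identity $q(x)=\sum_iq(\lambda_i)e_i$, valid for every polynomial $q$ (it follows from $e_i\circ e_j=\delta_{ij}e_i$, which gives $x^m=\sum_i\lambda_i^me_i$). If $x=\sum_i\lambda_ie_i=\sum_i\nu_ig_i$ for two Jordan frames, then feeding in the Lagrange interpolant supported at a single value $\nu$ shows that the idempotent $P_\nu:=\sum_{\lambda_i=\nu}e_i$ is intrinsic to $x$ --- it is literally $q(x)$, with no reference to the frame. Since a nonempty orthogonal sum of idempotents is nonzero, the set of distinct eigenvalues of $x$ is thereby determined, and so is the multiplicity of each $\nu$, namely the number of terms in any orthogonal primitive decomposition of the fixed idempotent $P_\nu$ (equivalently, the rank of the Euclidean Jordan subalgebra $\V(P_\nu,1)$). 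The converse then costs nothing: $\sum_i\lambda_ie_i$ is plainly an element of $\V$, grouping equal coefficients rewrites it as $\sum_\nu\nu P_\nu$, and the multiplicity count just made identifies its eigenvalues as $\lambda_1,\dots,\lambda_n$.

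The step I expect to carry the real weight --- everything else being linear algebra over $\R$ plus the self-adjointness of the multiplication operators --- is the invariance of multiplicities: that the number of primitive idempotents occurring in an orthogonal decomposition of a given idempotent $P$ depends on $P$ alone. This is precisely the assertion, recalled in the preliminaries, that all Jordan frames of a Euclidean Jordan algebra have the same number of elements, now applied inside the subalgebra $\V(P,1)$; I would also lean on the elementary Peirce-orthogonality facts used above to splice the refinements of the individual $f_j$ into one Jordan frame. Granting these, existence, uniqueness, and the converse all drop out.
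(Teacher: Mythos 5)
The paper offers no proof of this proposition at all --- it is quoted verbatim from Faraut--Kor\'{a}nyi --- so there is nothing internal to compare against; what you have written is essentially a reconstruction of the standard textbook argument (first decompose $x$ over its distinct eigenvalues via the spectral theorem for $L(x)$ restricted to the associative subalgebra $\R[x]$, then refine the resulting complete system of orthogonal idempotents into primitives, then get uniqueness from the polynomial functional calculus $q(x)=\sum_i q(\lambda_i)e_i$), and it is correct. Two small points. First, in the refinement step you argue Peirce-orthogonality only for primitives coming from different blocks $f_j$, $f_\ell$; the same $\V(c,1)\circ\V(c,0)=\{0\}$ argument is also what keeps the pieces pairwise orthogonal \emph{within} the iterative splitting of a single $f_j$ (and hence is what makes your linear-independence/termination count legitimate), so you should state that it is invoked there too. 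Second, you lean twice on the fact that all Jordan frames have the same cardinality (to get exactly $n$ terms, and to make multiplicities well defined via the rank of $\V(P_\nu,1)$). That is consistent with this paper, which records the equicardinality of Jordan frames as a preliminary before the proposition; but be aware that in Faraut--Kor\'{a}nyi that fact is itself extracted from the spectral decomposition machinery (rank via minimal polynomials of regular elements), so a genuinely from-scratch development would need to establish it independently or reorganize the argument to avoid circularity. Granting the inputs you explicitly cite, the existence, uniqueness, and converse all go through as you describe.
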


We define the {\it eigenvalue map} $\lambda:\V\rightarrow \Rn$ by 
$$\lambda(x)=\Big ( \lambda_1(x),\lambda_2(x),\ldots, \lambda_n(x)\Big ),$$
where $\lambda_1(x)\geq \lambda_2(x)\geq \ldots\geq  \lambda_n(x).$
This is well-defined and continuous \cite{baes}.\\

We define the {\it trace} of an element $x\in \V$ by $\tr(x):= \lambda_1(x) + \cdots +\lambda_n(x).$ Correspondingly, the {\it canonical (or trace) inner product} on $\V$ is defined by 
$$\langle x,y\rangle_{tr}:=\tr(x\circ y).$$
This defines an inner product on $\V$ that is compatible with the given Jordan structure. With respect to this inner product, the norm of any primitive element is one.

Throughout this paper, for a linear transformation $A:\V\rightarrow \V$ and $x\in \V$, we use, depending on the context,
 both the function notation $A(x)$ as well as the operator notation $Ax$.

Given $a\in \V$, we define the corresponding transformation $L_a:\V\rightarrow \V$ by $L_a(x)=a\circ x$. We say that two elements $a$ and $b$ {\it operator commute} in $\V$ if $L_a\,L_b=L_b\,L_a$. 
We remark that $a$ and $b$ operator commute if and only if there exist a Jordan frame $\{e_1,e_2,\ldots, e_n\}$ and real numbers $a_1,a_2,\ldots, a_n$, $b_1,b_2,\ldots, b_n$ such that 
$$a=a_1e_1+a_2e_2+\cdots+a_ne_n\quad\mbox{and}\quad b=b_1e_1+b_2e_2+\cdots+b_ne_n,$$
see \cite{faraut-koranyi}, Lemma X.2.2. (Note that the $a_i$s and $b_i$s need not be in the decreasing order.) In particular, in $\mathcal{S}^n$ or $\mathcal{H}^n$, operator commutativity reduces to the ordinary 
(matrix) commutativity.

A linear transformation between two Euclidean Jordan algebras is a (Jordan algebra) {\it homomorphism} if it preserves
  Jordan products. If it is also one-to-one and onto, then it is an {\it isomorphism}. If the algebras are the same, we call such an isomorphism an {\it automorphism}. Thus,
a linear transformation $A:\V\rightarrow \V$ is an {\it automorphism} of $\V$ 
 if it is invertible and
$$A(x \circ y) = Ax \circ Ay\quad \forall\,\,x,\, y \in \V.$$
The set of all  automorphisms of $\V$ is denoted by $\Aut(\V)$. 
When $\V$ is a product, say, $\V=\V_1\times \V_2$, for 
$\phi_i\in \Aut(\V_i)$, it is easy to see that $\phi$ defined by  
$$\phi(x):=\Big ( \phi_1(x_1),\phi_2(x_2)\Big ),\quad x=(x_1,x_2)\in \V_1\times \V_2$$
belongs to $\Aut(\V)$. Thus,
$$\Aut(\V_1)\times \Aut(\V_2)\subseteq \Aut(\V_1\times \V_2).$$
A similar statement can be made when $\V$ is a product of several factors. \\
When $\V$ carries the canonical inner product, every automorphism is inner product preserving and so, $\Aut(\V)$ is a closed subgroup of the orthogonal group of $\V$. A linear transformation $D:\V\rightarrow \V$ is a {\it derivation} if 
$$D(x\circ y)=D(x)\circ y+x\circ D(y)\quad \forall\,\,x,\, y \in \V.$$

We recall the following result from \cite{jeong-gowda} (essentially, \cite{faraut-koranyi}, Theorem IV.2.5).

\begin{proposition} \label{prop: jordan frame}
        Let $\V$ be essentially simple. If $\{e_1,\, \ldots, e_n\}$ and $\{e'_1,\, \ldots, e'_n\}$ are any
two Jordan frames in $\V$, then there exists $\phi \in \Aut(\V)$ such that $\phi(e_i) = e'_i$ for all $i = 1,\, \ldots,\, n$.
\end{proposition}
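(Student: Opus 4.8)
\emph{Sketch of proof.} The plan is to split according to the two possibilities permitted by essential simplicity. If $\V=\Rn$, with the coordinatewise Jordan product and unit $e=(1,\dots,1)$, then the idempotents of $\V$ are exactly the $\{0,1\}$-vectors, so the primitive idempotents are precisely the standard coordinate vectors $u_1,\dots,u_n$; hence \emph{every} Jordan frame coincides, as a set, with $\{u_1,\dots,u_n\}$. Two given Jordan frames $\{e_1,\dots,e_n\}$ and $\{e'_1,\dots,e'_n\}$ therefore differ only by a relabelling, say $e_i=u_{\tau(i)}$ and $e'_i=u_{\sigma(i)}$ for permutations $\tau,\sigma$, and the coordinate permutation associated with $\sigma\circ\tau^{-1}$ — which is invertible and manifestly preserves the coordinatewise product — is an automorphism of $\Rn$ carrying each $e_i$ to $e'_i$.

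If $\V$ is simple, the claim is (essentially) Theorem IV.2.5 of \cite{faraut-koranyi}, which one may simply quote. For a self-contained route I would first prove the special case that $\Aut(\V)$ acts transitively on the \emph{primitive} idempotents of $\V$; this is where simplicity is genuinely used, entering through the nonvanishing of all off-diagonal Peirce spaces relative to a Jordan frame. Granting this, one inducts on the rank $n$: given Jordan frames $\{e_1,\dots,e_n\}$ and $\{e'_1,\dots,e'_n\}$, choose $\psi\in\Aut(\V)$ with $\psi(e_1)=e'_1=:c$ and replace $\{e_i\}$ by $\{\psi(e_i)\}$, reducing to $e_1=e'_1=c$; then $\{e_2,\dots,e_n\}$ and $\{e'_2,\dots,e'_n\}$ are Jordan frames of the Peirce subalgebra $\V(c,0)=\{x\in\V:c\circ x=0\}$, which has rank $n-1$ and is again simple, every automorphism of it being — via the Peirce decomposition relative to $c$ — the restriction of an automorphism of $\V$ fixing $c$; the inductive hypothesis then completes the step. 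A more hands-on alternative that bypasses the Lie-theoretic ingredient is to verify transitivity on Jordan frames type by type: in $\Sn$, $\Hn$ and $\mathcal{Q}^n$ a Jordan frame is a complete family of pairwise-orthogonal rank-one Hermitian idempotents $v_iv_i^*$, hence amounts to an orthonormal basis $\{v_i\}$ of the coordinate space, and conjugation by a unitary carrying one basis to another is an automorphism; in the spin factor $\Ln$ a Jordan frame is $\{\tfrac12(e+u),\tfrac12(e-u)\}$ with $u$ a unit vector in $\{e\}^{\perp}$, and the orthogonal transformations of $\{e\}^{\perp}$ act transitively on such $u$ and extend to automorphisms of $\Ln$; and for $\mathcal{O}^3$ one invokes the classical fact that $\Aut(\mathcal{O}^3)$ is transitive on Jordan frames.

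The reductions here are routine granted the classification of simple Euclidean Jordan algebras, the Peirce decomposition, and the extendability of automorphisms from $\V(c,0)$ to $\V$; the single substantial ingredient is the transitivity of $\Aut(\V)$ on primitive idempotents in the simple case — equivalently, the base mechanism in each of the five types — which I expect to be the main obstacle, and it is precisely there that (essential) simplicity cannot be dispensed with, as the situation in a general, non-essentially-simple algebra shows. Accordingly I anticipate a short write-up, with the bulk of the work either delegated to \cite{faraut-koranyi} or discharged by the type-by-type check above.
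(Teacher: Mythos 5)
Your proposal is correct and takes essentially the same route as the paper, which in fact offers no proof of this proposition at all: it simply recalls it from \cite{jeong-gowda}, i.e., Faraut--Kor\'{a}nyi Theorem IV.2.5 for the simple case together with the elementary $\Rn$ case, which is exactly the split you make (and your $\Rn$ argument and type-by-type checks are fine). One caution about your optional self-contained induction: the claim that \emph{every} automorphism of the Peirce algebra $\V(c,0)$ extends to an automorphism of $\V$ fixing $c$ is false in general --- for $\V=\mathcal{O}^3$ the stabilizer of a primitive idempotent is $Spin(9)$ and induces only $SO(9)$, not the full $\Aut(\mathcal{L}^{10})\cong O(9)$, on $\V(c,0)$ --- though the weaker fact the induction actually needs, transitivity of that stabilizer on ordered Jordan frames of $\V(c,0)$, does hold, so that sketch is repairable.
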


\section{Weakly spectral sets and functions}

\begin{definition}
We say that a set $E$ in $\V$ is {\it weakly spectral} if 
$$A(E)\subseteq E\quad\mbox{for all}\,\,A\in \Aut(\V).$$
A function $F:\V\rightarrow \R$ is said to be {\it weakly spectral} if 
$$F(Ax)=F(x)\,\,\mbox{for all}\,\,x\in \V,\,A\in \Aut(\V).$$
\end{definition}

\begin{remarks} 
${\bf (1)}$ Suppose $E$ is a spectral set, that is, 
$E=\lambda^{-1}(Q)$ for some permutation invariant set $Q$ in $\Rn$. Then,
\begin{equation}\label{spectral property}
x\in E,\,y\in \V,\,\lambda(y)=\lambda(x)\Rightarrow y\in E.
\end{equation}
Now, let $x\in E$ and $A\in \Aut(\V)$. As $A$ maps Jordan frames to Jordan frames, $\lambda(Ax)=\lambda(x)$. From (\ref{spectral property}), $Ax\in E$. This proves that $E$ is weakly spectral. Hence, {\it every spectral set is weakly spectral.}
Now suppose $F:\V\rightarrow \R$ is a spectral function so that for some permutation invariant function $f:\Rn\rightarrow \R$, $F=f\circ \lambda$. It follows that $F(Ax)=f(\lambda(Ax))=f(\lambda(x))=F(x)$ for any $A\in \Aut(\V)$. Thus, $F$ is weakly spectral.
This proves that {\it every spectral function is weakly spectral.}\\
${\bf (2)}$ It has been observed in \cite{jeong-gowda}, Theorem 2, that {\it in any essentially simple algebra, every weakly spectral set is spectral.} The 
 following example shows that weakly spectral sets/functions can be different from spectral sets/functions in general algebras. 
 \\In the
product algebra $\V = \mathcal{R} \times \mathcal{S}^2$, let $\Omega = \mathcal{R}_+ \times \mathcal{S}^2$, and
        $$ x = \Big( 1,\, \big[ \begin{smallmatrix} -1 & 0 \\ 0 & 2 \end{smallmatrix} \big] \Big),\, y = \Big( -1,\, \big[ \begin{smallmatrix} 1 & 0 \\ 0 & 2 \end{smallmatrix} \big] \Big). $$
        Since $x\in \Omega$, $y\not\in \Omega$, and $\lambda(x)=(2,1,-1)^\T=\lambda(y)$,  we see that $\Omega$ cannot be of the form $\lambda^{-1}(Q)$ for any (permutation invariant) set $Q$ in $\R^3$. Thus, $\Omega$ is not a spectral set in $\V$. Now, identity transformation is the only automorphism of $\R$ and any automorphism of $\mathcal{S}^2$ is of the form 
$X\mapsto UXU^\T$ for some orthogonal matrix $U$.  
As $\R$ and $\mathcal{S}^2$ are non-isomorphic Euclidean Jordan algebras, we see (from Proposition 1 in \cite{gowda} or 
Corollary \ref{corollary}  in the Appendix) that 
 automorphisms of $\V$ are of the form
$(t,X)\mapsto (t,UXU^\T)$, for some orthogonal matrix $U$. 
It follows that $\Omega$ is weakly spectral. The characteristic function of $\Omega$ is an example of a weakly spectral function that is not spectral.
\\
${\bf (3)}$ As a consequence of Corollary \ref{corollary} in the Appendix, one can show the following:
Suppose $\V=\V_1\times \V_2\times\cdots\times\V_m$ where $\V_1,\V_2,\ldots, \V_m$ are non-isomorphic simple algebras.
Let $E_i$ be a spectral set in $\V_i$, $i=1,2,\ldots, m$. Then, $E_1\times E_2\times \cdots\times E_m$ is weakly spectral in $\V$. Not every weakly spectral set in $\V$ arises this way: Referring to example given in Remark 2,
$$\{(t,X)\in \Omega: t+\tr(X)=0\}$$
is weakly spectral but not a product of two spectral sets. \\
We also note, as a consequence of Theorem  \ref{theorem on automorphisms}  that a product of (weakly) spectral sets need not be (weakly) spectral. The set $\{0\}\times {\cal S}^2$ in ${\cal S}^2\times {\cal S}^2$ is one such example.
\\${\bf (4)}$ It will be shown  in  Theorem \ref{theorem on weakly spectral sets}  that in any algebra that is not essentially simple, the class of 
weakly spectral sets is strictly larger than the class of spectral sets. This shows that Theorem \ref{gowda-jeong theorem} is applicable to a  wider class of sets/functions than Theorem \ref{ramirez theorem}. 
\end{remarks}

\gap

\noindent {\bf Proof of Theorem \ref{gowda-jeong theorem}.}
        As $a$ is a local minimizer of the problem (\ref{eq: Minimization in EJA}), we have
        $$ \Theta(a) + F(a) \leq \Theta(x) + F(x) \qquad \text{for all} \quad x \in N_a \cap \Omega, $$
        where $N_a$ denotes some open ball around  $a$. Let $u$ and $v$ be arbitrary (but fixed) elements of $\V$. Let $D:=L_u L_v - L_v L_u$, where $L_u(x) := u \circ x$, etc.
       Then, Proposition II.4.1. in \cite{faraut-koranyi} shows that $D$ is a derivation on $\V$; 
hence, as observed in \cite{faraut-koranyi}, p. 36,  
$e^{tD}$ is an automorphism of $\V$ for all $t\in R$. 
Therefore, by the continuity of $t\mapsto e^{tD}a$ and the automorphism invariance of $\Omega$, 
$x:=e^{tD}a\in N_a\cap \Omega$
        for all $t$ close to zero in $\R$.
        Then,
        $$ \Theta(a) + F(a) \leq \Theta(e^{tD}a) + F(e^{tD}a) \,\, \text{for all} \,\, t\, \text{close to } 0. $$
        As $F(e^{tD}a)=F(a)$ by the automorphism invariance of $F$, we see that
        $$ \Theta(a) \leq \Theta(e^{tD}a) \qquad \text{for all} \quad t \text{ near } 0. $$
        This implies that the derivative of $ \Theta(e^{tD}a)$ at $t=0$ is zero. Thus, we  have $ \ip{\Theta^{\prime}(a)}{Da} = 0$. Putting $b:=\Theta^{\prime}(a)$ and recalling $D=L_uL_v-L_vL_u$, we get
        $$ \ip{b}{(L_u L_v - L_v L_u)(a)} = 0. $$
        Since $L_u$ and $L_v$ are self-adjoint, the above expression can be rewritten as
        $$ \ip{v \circ a}{u \circ b} - \ip{u \circ a}{v \circ b} = 0. $$
        This, upon  rearrangement, leads to $ \ip{(L_b L_a - L_a L_b)u}{v} = 0$.
        As this equation holds for all $u$ and $v$, we see that $L_b L_a = L_a L_b$, proving the operator commutativity of $a$ and $b\,(= \Theta'(a))$ in $\V$.
$\hfill$ $\qed$

\gap

An immediate special case of Theorem \ref{gowda-jeong theorem} is obtained by taking $F=0$: If $\Omega$ is weakly spectral and $\Theta$ is Fr\'{e}chet differentiable, then any local minimizer $a$ of $\min_{\Omega}\,\Theta(x)$ operator commutes with $\Theta^{\prime}(a)$. This can further be specialized by assuming that $\Theta$ is linear, that is, of the form $\Theta(x)=\langle b,x\rangle.$
\\
A number of applications mentioned in \cite{ramirez-seeger-sossa} have analogs for weakly spectral sets and functions. We mention one application that is especially important. 

\begin{theorem}\label{vi}
Suppose $\Omega\subseteq \V$ and $F:\V\rightarrow \R$ are weakly spectral. Let $G:\V\rightarrow \V$ be arbitrary. Consider the {\it variational inequality problem} VI$(G,\Omega,F)$: Find $x^*\in \Omega$ such that
$$ \langle G(x^*),x-x^*\rangle +F(x)-F(x^*)\geq 0\quad\mbox{for all}\,\,x\in \Omega.$$
If $a$ solves VI$(G,\Omega,F)$, then $a$ operator commutes with $G(a)$.
\end{theorem}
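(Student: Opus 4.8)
The plan is to reduce Theorem \ref{vi} to the already-established Theorem \ref{gowda-jeong theorem} by recognizing a solution of the variational inequality as a (global) minimizer of a suitable function over $\Omega$. First I would fix $a\in\Omega$ solving VI$(G,\Omega,F)$ and freeze the data at $a$: set $\Theta(x):=\langle G(a),\,x\rangle$. Since $\Theta$ is linear, it is Fr\'echet differentiable on all of $\V$ with $\Theta'(x)=G(a)$ for every $x$, using the inner-product identification of the derivative of $x\mapsto\langle c,x\rangle$ with $c$.

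Next I would rewrite the defining inequality of VI$(G,\Omega,F)$. For every $x\in\Omega$ we have
$$\langle G(a),\,x-a\rangle + F(x)-F(a)\geq 0,$$
which is exactly $\Theta(x)+F(x)\geq \Theta(a)+F(a)$. Hence $a$ is a global (in particular, a local) minimizer of $\Theta+F$ over $\Omega$.

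Now all hypotheses of Theorem \ref{gowda-jeong theorem} hold: $\Omega$ and $F$ are weakly spectral, $\Theta$ is Fr\'echet differentiable, and $a$ is a local minimizer of $\min_{\Omega}\,\Theta(x)+F(x)$. That theorem then gives that $a$ and $\Theta'(a)$ operator commute in $\V$; since $\Theta'(a)=G(a)$, this is precisely the claim that $a$ operator commutes with $G(a)$.

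I do not anticipate a real obstacle; the only point meriting a word of care is that the auxiliary function $\Theta$ depends on the fixed solution $a$. But because $a$ is held fixed throughout the argument, $\Theta$ is a genuine linear functional on $\V$ and $\Theta'(a)=G(a)$, so the reduction is legitimate. It is worth remarking that no convexity or continuity hypothesis on $G$ (or on $F$ beyond weak spectrality) is invoked, since everything needed is already packaged into Theorem \ref{gowda-jeong theorem}.
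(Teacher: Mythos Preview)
Your proposal is correct and follows essentially the same route as the paper: fix $a$, set $\Theta(x)=\langle G(a),x\rangle$, observe that the VI inequality says $a$ is a (global, hence local) minimizer of $\Theta+F$ over $\Omega$, and invoke Theorem~\ref{gowda-jeong theorem} with $\Theta'(a)=G(a)$. There is nothing to add.
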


\begin{proof} The proof is similar to the one given in \cite{ramirez-seeger-sossa}, Proposition 1.9. If $a$ solves VI$(G,\Omega,F)$, then
$$\langle G(a),x-a\rangle+F(x)-F(a)\geq 0\,\,\mbox{for all}\,\,x\in \Omega.$$
This implies
$$\langle G(a),x\rangle+F(x)\geq \langle G(a),a\rangle+F(a)\,\,\mbox{for all}\,\,x\in \Omega.$$ 
So, $a$ minimizes $\langle G(a),x\rangle+F(x)$ over $\Omega$.
By Theorem \ref{gowda-jeong theorem} applied to $\Theta(x):=\langle G(a),x\rangle$, we see that $a$ commutes with $G(a)$.
\end{proof}

As an illustration of the above result, let $K$ be a closed convex cone in $\V$ 
and $G:\V\rightarrow \V$ be arbitrary.
Consider the {\it cone complementarity problem} CP$(G,K)$ of finding an $x^*\in K$ 
such that 
$$x^*\in K,\, G(x^*)\in K^*,\,\,\mbox{and}\,\,\langle x^*,G(x^*)\rangle=0$$
where $K^*$ is the dual of $K$ defined by $K^*=\{y\in \V:\langle y,x\rangle\geq 0\,\,\mbox{for all}\,\,x\in K\}.$

\begin{corollary}\label{cp}
Suppose  $K$ is weakly spectral. If $a$ solves the cone complementarity problem CP$(G,K)$, then $a$ commutes with $G(a)$.
\end{corollary}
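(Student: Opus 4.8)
The plan is to reduce the cone complementarity problem CP$(G,K)$ to a variational inequality problem over $K$ and then invoke Theorem \ref{vi}. First I would observe that, since $K$ is a closed convex cone with dual $K^*$, any solution $a$ of CP$(G,K)$ is automatically a solution of the variational inequality problem VI$(G,K,0)$: find $x^*\in K$ with $\langle G(x^*),\,x-x^*\rangle\geq 0$ for all $x\in K$. Indeed, if $a\in K$, $G(a)\in K^*$, and $\langle a,\,G(a)\rangle=0$, then for every $x\in K$ we have $\langle G(a),\,x-a\rangle=\langle G(a),\,x\rangle-\langle G(a),\,a\rangle=\langle G(a),\,x\rangle\geq 0$, the last inequality holding because $G(a)\in K^*$ and $x\in K$. (Only this one direction is needed, and it does not even use convexity of $K$; it is the converse implication that would require it.)

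Next I would note that the zero function $F\equiv 0$ is trivially weakly spectral, since $F(Ax)=0=F(x)$ for every $A\in \Aut(\V)$, and that $\Omega:=K$ is weakly spectral by hypothesis. Hence Theorem \ref{vi}, applied with this $\Omega$ and this $F$ and with $\Theta(x):=\langle G(a),\,x\rangle$, applies to $a$ and yields at once that $a$ operator commutes with $G(a)$, which is the assertion of the corollary.

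There is essentially no obstacle here: the entire substance lies in Theorem \ref{vi} (equivalently, Theorem \ref{gowda-jeong theorem}), and what remains is the classical, routine observation that a complementarity problem over a convex cone is a special instance of a variational inequality over that cone. The only points worth making explicit are that the weak spectrality of $K$ is exactly the hypothesis required to invoke Theorem \ref{vi}, and that $F=0$ is an admissible (weakly spectral) choice.
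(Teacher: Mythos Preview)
Your proposal is correct and matches the paper's intended approach: the paper presents Corollary~\ref{cp} as an immediate consequence of Theorem~\ref{vi} with $\Omega=K$ and $F\equiv 0$, and your reduction of CP$(G,K)$ to VI$(G,K,0)$ is exactly the standard observation needed. One minor remark: Theorem~\ref{vi} does not take a $\Theta$ as input, so your reference to $\Theta(x):=\langle G(a),x\rangle$ is superfluous (that choice of $\Theta$ appears only inside the proof of Theorem~\ref{vi}, not in its statement).
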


\begin{remark} {\bf (5)} The above corollary yields the  following:
{\it Suppose $K$ is a closed convex cone in $\V$ that is
weakly spectral. If $a\in K$ and $b\in K^*$ satisfy $\langle a,b\rangle=0$, then $a$ and $b$ operator commute. }
Such a  result for $K=\V_+$ (the symmetric cone of $\V$) is well-known, see Proposition 6 in \cite{gowda-sznajder-tao}.
\end{remark}
\section{Normal decomposition systems}

Before giving a proof of Theorem \ref{normal decomposition system theorem}, we briefly recall the definition of a normal decomposition system and mention relevant properties.
\\

\begin{definition}
Let $X$ be a real  inner product space, $\G$ be a closed subgroup of the orthogonal group of $X$, and $\gamma : X \rightarrow X$ be a mapping satisfying the following properties:
\begin{itemize}
        \item [$(a)$] $\gamma$ is $\G$-invariant, that is, $\gamma(Ax) = \gamma(x)$ for all $x \in X$, $A \in \G$.
        \item [$(b)$] For each $x\in X$, there exists $A\in \G$ such that $x=A\gamma(x)$, and
        \item [$(c)$] For all $x,w\in X$, we have $\ip{x}{w} \leq \ip{\gamma(x)}{\gamma(w)}$.
\end{itemize}
Then, $(X,\, \G,\, \gamma)$ is called a {\it normal decomposition system} \cite{lewis}. In such a system, a set $\Omega \subset X$ is said to be {\it $\G$-invariant} if $A(\Omega) \subseteq \Omega$ for all $A \in \G$; a function $F : X \rightarrow \R$ is said to be {\it $\G$-invariant} if $F(Ax) = F(x)$ for all $x \in X$ and $A \in \G$.
\end{definition}

In \cite{lewis}, various results on normal decomposition systems are given. In particular, the following is proved:

\begin{proposition} [\cite{lewis}, Proposition 2.3] \label{prop: lewis}
        In a normal decomposition system, for any two elements $x$ and $w$, we have
        $$ \max_{A \in \G}\, \ip{Ax}{w} = \ip{\gamma(x)}{\gamma(w)}. $$
        Also, $\ip{x}{w} = \ip{\gamma(x)}{\gamma(w)}$ holds for two elements $x$ and $w$ if and only if there exists an $A \in \G$ such that $x = A\gamma(x)$ and $w = A\gamma(w)$.
\end{proposition}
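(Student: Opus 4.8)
The strategy mirrors the proof of Theorem \ref{gowda-jeong theorem}, but replaces the one-parameter automorphism group $e^{tD}$ with a variational argument driven directly by Proposition \ref{prop: lewis}. First I would record what "$-\Theta'(a)$ and $a$ commute in $(X,\G,\gamma)$" should mean: in the normal decomposition setting, two elements $x$ and $w$ are said to \emph{commute} if $\ip{x}{w} = \ip{\gamma(x)}{\gamma(w)}$, equivalently (by the second half of Proposition \ref{prop: lewis}) if there is a single $A \in \G$ with $x = A\gamma(x)$ and $w = A\gamma(w)$. So the goal is to show
\begin{equation} \label{eq: goal NDS}
\ip{-\Theta'(a)}{a} = \ip{\gamma(-\Theta'(a))}{\gamma(a)}.
\end{equation}
Since $\ip{Ax}{w} \le \ip{\gamma(x)}{\gamma(w)}$ always holds by property $(c)$, and by property $(b)$ we may write $a = B\gamma(a)$ for some $B \in \G$, the inequality $\ip{-\Theta'(a)}{a} \le \ip{\gamma(-\Theta'(a))}{\gamma(a)}$ is automatic; the content is the reverse inequality.

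\textbf{Key steps.} Fix $B \in \G$ with $a = B\gamma(a)$, and by Proposition \ref{prop: lewis} pick $C \in \G$ achieving $\max_{A \in \G} \ip{A(-\Theta'(a))}{a} = \ip{\gamma(-\Theta'(a))}{\gamma(a)}$, i.e. $\ip{C(-\Theta'(a))}{a} = \ip{\gamma(-\Theta'(a))}{\gamma(a)}$ (one can also arrange $-\Theta'(a) = C\gamma(-\Theta'(a))$ and $a = C\gamma(a)$ simultaneously). The idea now is to build a curve in $\Omega$ starting at $a$ along which $F$ is constant and along which $\Theta$ cannot have a strict decrease, then differentiate. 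Consider the path $t \mapsto x(t) := (1-t)\,a + t\,(C^{-1} a)$ for $t \in [0,1]$ — wait, more carefully: I want a curve $\sigma(t)$ with $\sigma(0) = a$, $\sigma(t) \in \Omega$ for small $t \ge 0$, $F(\sigma(t)) = F(a)$, and $\sigma'(0)$ pointing in a direction making $\ip{\Theta'(a)}{\sigma'(0)}$ negative unless \eqref{eq: goal NDS} holds. Here is the construction: let $g := -\Theta'(a)$, let $C$ be as above, and set $\sigma(t) := (1-t)\,a + t\,C a$. Since $\Omega$ is convex and $\G$-invariant, $Ca \in \Omega$ and hence $\sigma(t) \in \Omega$ for all $t \in [0,1]$; since $a$ is a local minimizer and $\sigma(t) \to a$ as $t \to 0^+$, for small $t$ we have $\Theta(\sigma(t)) + F(\sigma(t)) \ge \Theta(a) + F(a)$. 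By convexity and $\G$-invariance of $F$, $F(\sigma(t)) \le (1-t)F(a) + tF(Ca) = F(a)$, so $\Theta(\sigma(t)) \ge \Theta(a)$ for small $t > 0$. Differentiating at $t = 0^+$ gives $\ip{\Theta'(a)}{Ca - a} \ge 0$, i.e. $\ip{g}{Ca} \le \ip{g}{a}$. But $\ip{g}{Ca} = \ip{C^{-1}g}{a}$... hmm, I need $\ip{Cg}{a}$, not $\ip{C^{-1}g}{a}$. So instead I should choose $C$ so that $\ip{C^{-1}(-\Theta'(a))}{a}$ is the maximum — by Proposition \ref{prop: lewis} applied with the roles set so that $\max_{A\in\G}\ip{A^{-1}g}{a} = \max_{A\in\G}\ip{Ag}{a} = \ip{\gamma(g)}{\gamma(a)}$ (the group is closed under inverses), pick $C$ with $\ip{C^{-1}g}{a} = \ip{\gamma(g)}{\gamma(a)}$. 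Then the inequality $\ip{g}{Ca} \le \ip{g}{a}$ reads $\ip{\gamma(g)}{\gamma(a)} = \ip{C^{-1}g}{a} = \ip{g}{Ca} \le \ip{g}{a}$, which is exactly the reverse inequality needed, giving \eqref{eq: goal NDS}. Combined with the trivial direction, $\ip{-\Theta'(a)}{a} = \ip{\gamma(-\Theta'(a))}{\gamma(a)}$, so by Proposition \ref{prop: lewis} there is $A \in \G$ with $-\Theta'(a) = A\gamma(-\Theta'(a))$ and $a = A\gamma(a)$; that is, $-\Theta'(a)$ and $a$ commute in $(X,\G,\gamma)$.

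\textbf{Main obstacle.} The delicate point is the differentiation step and, relatedly, making sure the chosen $C$ is "used in the right direction." $\Theta$ is only assumed Fréchet differentiable, which is exactly enough: along the line segment $\sigma(t) = a + t(Ca - a)$, the function $t \mapsto \Theta(\sigma(t))$ has right derivative $\ip{\Theta'(a)}{Ca-a}$ at $t = 0$, and since this function has a local minimum at $t = 0$ on $[0,1]$ that right derivative is $\ge 0$ — this is clean. The subtler bookkeeping is the inverse: one must apply Proposition \ref{prop: lewis} to the pair $(g, a)$ choosing the maximizer of $A \mapsto \ip{Ag}{a}$, then note $\ip{Ag}{a} = \ip{g}{A^{-1}a}$ and relabel $A^{-1} = C$ (legitimate since $\G$ is a group), so that the curve should actually be $\sigma(t) = a + t(C^{-1}a - a)$ with $C$ the maximizer — I will sort out this index juggling carefully in the writeup so the endpoint $C^{\pm 1}a$ of the segment matches the element appearing in the Lewis maximum. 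No deeper difficulty is expected; convexity of $\Omega$ (ensuring the whole segment lies in $\Omega$, so we need only that $a$ is a local minimizer, not a global one) and convexity of $F$ (ensuring $F$ does not increase along the segment) are used precisely once each, and property $(c)$ supplies the easy inequality.
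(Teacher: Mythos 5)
There is a fundamental mismatch between what you proved and what was asked. The statement in question is Proposition \ref{prop: lewis} itself (Lewis's result that $\max_{A \in \G}\ip{Ax}{w} = \ip{\gamma(x)}{\gamma(w)}$, together with the characterization of when $\ip{x}{w} = \ip{\gamma(x)}{\gamma(w)}$ via a single $A\in\G$ decomposing both elements). Your proposal instead proves Theorem \ref{normal decomposition system theorem}, the commutation principle for normal decomposition systems, and it does so by \emph{invoking} Proposition \ref{prop: lewis} twice --- once to pick the maximizer $C$ and once to convert the final equality into simultaneous decomposability. So the object you were supposed to establish is used as the key lemma and is never derived from the axioms $(a)$--$(c)$ of a normal decomposition system; relative to the assignment the argument is circular, and nothing in it addresses either the ``$\max$'' identity or the equality characterization. (Incidentally, as a proof of Theorem \ref{normal decomposition system theorem} your argument is essentially the paper's own: move from $a$ toward $Aa$ along the segment, use convexity and $\G$-invariance of $\Omega$ and $F$, differentiate, and finish with Lewis's proposition. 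The paper does this for an \emph{arbitrary} $A\in\G$ and only then takes the maximum, which avoids all the $C$ versus $C^{-1}$ bookkeeping you flag as the delicate point.)

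For the statement actually posed, a proof has to proceed from the definition: the inequality $\ip{Ax}{w}\le\ip{\gamma(Ax)}{\gamma(w)}=\ip{\gamma(x)}{\gamma(w)}$ for every $A\in\G$ follows from $(c)$ and $(a)$; attainment follows from $(b)$ by writing $x=C\gamma(x)$, $w=B\gamma(w)$ and checking that $A:=BC^{-1}$ gives $\ip{Ax}{w}=\ip{B\gamma(x)}{B\gamma(w)}=\ip{\gamma(x)}{\gamma(w)}$ since $\G$ consists of orthogonal transformations; and the ``if'' half of the equality statement is the same one-line orthogonality computation. The substantive content is the ``only if'' half --- producing a \emph{common} $A$ with $x=A\gamma(x)$ and $w=A\gamma(w)$ from the mere scalar equality $\ip{x}{w}=\ip{\gamma(x)}{\gamma(w)}$ --- and this does not follow from the manipulations in your writeup; it is precisely what Lewis's Proposition 2.3 proves (the paper itself only cites it). Your proposal contains no argument for this step, so the gap is not a technicality but the entire nontrivial part of the statement.
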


Motivated by the above proposition, we say that $x$ and $w$ {\it commute in $(X,\G, \gamma)$} if there exists an $A \in \G$ such that $x = A\gamma(x)$ and $w = A\gamma(w)$.\\

Now consider an essentially simple Euclidean Jordan algebra $\V$. We assume that $\V$ carries the 
canonical inner product and let $\G = \Aut(\V)$.
Let $\{\mybar{e}_1,\, \ldots,\, \mybar{e}_n\}$ be a fixed Jordan frame in $\V$
(with specified order). Define for any $x\in \V$,
\begin{equation} \label{eq: gamma}
        \gamma(x) := \sum_{i=1}^{n} \lambda_i(x) \mybar{e}_i,
\end{equation}
where $\lambda_i(x)$ are components of $\lambda(x)$. As eigenvalues are preserved under automorphisms, we see that 
$\gamma$ satisfies condition $(a)$ in the definition of  normal decomposition system. 
Since $\V$ is essentially simple, any Jordan frame can be mapped onto any other by an element of $\G$ (by Proposition \ref{prop: jordan frame}). Thus, given any $x\in \V$ with its spectral decomposition  $x = \sum_{1}^{n} \lambda_{i}(x) f_i$, we can find  $A\in \G$ such that $A(\mybar{e_i})=f_i$ for all $i$. Then,
$$ x = A\left( \sum_{i=1}^{n} \lambda_i(x) \mybar{e}_i \right) = A\gamma(x). $$
This verifies condition $(b)$ in the definition of  normal decomposition system.
Finally, for all $x,\, w \in X$, we have the so-called Theobald- von Neumann inequality 
$\ip{x}{w} \leq \ip{\gamma(x)}{\gamma(w)}$, see for example, \cite{lim-kim-faybusovich}, \cite{baes}, or \cite{gowda-tao-cauchy}. Putting all these together,  we have
the following result:

\begin{proposition} \label{lim et al}
        Every essentially simple Euclidean Jordan algebra $\V$ is a normal decomposition system with $X = \V$, $\G = \Aut(\V)$, and $\gamma : \V \rightarrow \V$ defined as in (\ref{eq: gamma}).
\end{proposition}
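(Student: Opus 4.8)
The plan is to verify the three defining conditions $(a)$, $(b)$, $(c)$ of a normal decomposition system for the triple $(\V,\Aut(\V),\gamma)$, all of the needed ingredients being already available. Condition $(a)$ amounts to the statement that eigenvalues are invariant under automorphisms: if $A\in\Aut(\V)$ and $x=\sum_{i=1}^n\lambda_i(x)f_i$ is a spectral decomposition of $x$, then, since $A$ carries the Jordan frame $\{f_1,\dots,f_n\}$ to the Jordan frame $\{Af_1,\dots,Af_n\}$, the expression $Ax=\sum_{i=1}^n\lambda_i(x)(Af_i)$ is a spectral decomposition of $Ax$; by the uniqueness part of Proposition \ref{prop: spectral decomposition}, $\lambda_i(Ax)=\lambda_i(x)$ for every $i$, whence $\gamma(Ax)=\sum_i\lambda_i(Ax)\mybar{e}_i=\sum_i\lambda_i(x)\mybar{e}_i=\gamma(x)$.

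For condition $(b)$ I would use the essential simplicity of $\V$ through Proposition \ref{prop: jordan frame}. Given $x\in\V$ with spectral decomposition $x=\sum_{i=1}^n\lambda_i(x)f_i$, apply that proposition to the two Jordan frames $\{\mybar{e}_1,\dots,\mybar{e}_n\}$ and $\{f_1,\dots,f_n\}$ to produce $A\in\Aut(\V)$ with $A\mybar{e}_i=f_i$ for all $i$; then $A\gamma(x)=A\left(\sum_i\lambda_i(x)\mybar{e}_i\right)=\sum_i\lambda_i(x)f_i=x$, which is $(b)$. Condition $(c)$ is precisely the Theobald--von Neumann inequality $\ip{x}{w}\le\ip{\gamma(x)}{\gamma(w)}$, which holds in every Euclidean Jordan algebra with respect to the canonical inner product (see, e.g., \cite{lim-kim-faybusovich}, \cite{baes}, \cite{gowda-tao-cauchy}); note that with respect to this inner product $\Aut(\V)$ is indeed a closed subgroup of the orthogonal group of $\V$, as required by the definition. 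Assembling $(a)$, $(b)$, and $(c)$ then gives the claim.

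The only place where a genuine hypothesis enters --- and hence the main (if modest) obstacle --- is condition $(b)$, which relies on essential simplicity via Proposition \ref{prop: jordan frame}: in an algebra that decomposes as a product of simple factors an automorphism must preserve that decomposition and cannot exchange non-isomorphic factors, so there is in general no automorphism sending the fixed frame $\{\mybar{e}_i\}$ onto an arbitrary frame $\{f_i\}$, and the $\gamma$ of (\ref{eq: gamma}) then fails to satisfy $(b)$. Conditions $(a)$ and $(c)$, by contrast, hold in full generality. Thus the proof is a matter of organizing the facts recalled above rather than proving anything new.
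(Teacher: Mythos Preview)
Your proposal is correct and follows essentially the same route as the paper: the paper verifies $(a)$ by invariance of eigenvalues under automorphisms, $(b)$ by invoking Proposition~\ref{prop: jordan frame} (using essential simplicity) to map the fixed frame $\{\mybar{e}_i\}$ onto the frame $\{f_i\}$ from the spectral decomposition of $x$, and $(c)$ by quoting the Theobald--von Neumann inequality from the same references you cite. Your additional remark that essential simplicity is used only for $(b)$ is accurate and consonant with the paper's presentation.
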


In this setting, two elements $x,y\in X$ commute if and only if there exists a Jordan frame $\{f_1,f_2,\ldots, f_n\}$ such that
\begin{equation} \label{simultaneous diagonal representation}
x=\sum_{1}^{n} \lambda_{i}(x) f_i\quad\mbox{and}\quad y = \sum_{1}^{n} \lambda_{i}(y) f_i.
\end{equation}
We note that this is stronger than the operator commutativity of $x$ and $y$. For example, in $\V=\R^2$, $x=(1,0)^\T$ and $y=(0,1)^\T$ operator commute but does not commute in the above sense.

\begin{remark}{\bf (6)}
Lim, Kim, and Faybusovich  (\cite{lim-kim-faybusovich}, Corollary 4), show that when $\V$ is
 a simple Euclidean Jordan algebra, $(\V, {\cal K}, \gamma)$ is a normal decomposition system, where ${\cal K}$ is the connected component of identity in $\Aut(\V)$ and $\gamma$ is defined as in (\ref{eq: gamma}).
\end{remark}

       In  \cite{lewis}, Lewis  provides numerous examples of normal decomposition systems. In particular,
the algebras $\Sn$ and $\Hn$ (see Section 2) are normal decomposition systems where $\G$ is the corresponding
automorphism group, $\gamma(X)$ is the diagonal matrix with $\lambda(X)$ as the diagonal. Another example is the space $M_{m,n}$ of all real $m\times n$ matrices with inner product $\langle X,Y\rangle:=\tr(X^{T}Y)$, with $\G$ consisting
 of transformations of the form $X\mapsto UXV^T$, where $U$ and $V$ are orthogonal matrices, and $\gamma(X)$ is an $m\times n$ matrix with diagonal consisting of singular values of $X$ written in the decreasing order and zeros elsewhere.

\gap

\noindent{\bf Proof of Theorem \ref{normal decomposition system theorem}.} Since $a$ is a local minimizer of the problem (\ref{eq: Minimization in NDS}), we have
        $$ \Theta(a) + F(a) \leq \Theta(x) + F(x) \qquad \text{for all} \quad x \in N_a \cap \Omega, $$
        where $N_a$ denotes (some) neighborhood of $a$. Let $A$ be an arbitrary element of $\G$. As $\Omega$ is convex and $\G$-invariant, we have, for all positive $t$ near zero, $(1-t)a + tAa \in N_a \cap \Omega$. Thus,
        $$ \Theta(a) + F(a) \leq \Theta\Big ((1-t)a + tAa\Big ) + F\Big ((1-t)a + tAa\Big ), $$
        for all positive $t$ near $0$. As $F$ is convex and $\G$-invariant,
        $$ F\Big ((1-t)a + tAa\Big ) \leq (1-t)F(a) + tF(Aa) = (1-t)F(a) + tF(a)=F(a); $$
        hence, $$\Theta(a) \leq \Theta\Big ((1-t)a + tAa\Big )$$ for all positive $t$ near $0$. 
This implies that $\ip{\Theta^{\prime}(a)}{Aa-a} \geq 0$, that is, $\ip{\Theta^{\prime}(a)}{Aa} \geq \ip{\Theta^{\prime}(a)}{a}$. Now let $b := - \Theta^{\prime}(a)$ so that $\ip{b}{Aa} \leq \ip{b}{a}$.
        Then, as $A\in \G$ is arbitrary, we have
        $$ \max_{A\in \G} \, \ip{b}{Aa} \leq \ip{b}{a}. $$
        Using Proposition \ref{prop: lewis}, we see that $\ip{\gamma(b)}{\gamma(a)} \leq \ip{b}{a}$. Since the reverse inequality always holds in a normal decomposition system, the above inequality turns into an equality. By Proposition \ref{prop: lewis}, $a$ and $b$ commute in $(X,\, \G,\, \gamma)$.
$\hfill$ $\qed$

\gap

\begin{remark}{\bf (7)} One might ask if the commutativity of $a$ and $-\Theta^{\prime}(a)$ in the above theorem could be replaced by that of $a$ and $\Theta^{\prime}(a)$. To answer this, we consider $X=\R^2$ with the usual inner product, let $\G$ be the set of all $2\times 2$ signed permutation matrices (having exactly one nonzero entry, either $1$ or $-1$, in each row/column), and $\gamma(x)=|x|^\downarrow$ (which is the vector of absolute values of entries of $x$ written in the decreasing order). Let $a=(-1,1)^\T$ and $b=(3,-2)^\T$, $\Omega:=\mbox{convex-hull}\{Aa:A\in \G\}$, $\Theta(x)=\langle b,x\rangle$, and $F=0$. Then, it is easy to see that $a$ minimizes $\Theta$ over $\Omega$ and commutes with $-b$ (which is $-\Theta^{\prime}(a)$), but does not commute with $b$.
\end{remark}

\gap

We now state analogs of Theorem \ref{vi} and Corollary \ref{cp} in normal decomposition systems.

\begin{theorem}\label{vi for nds}
Suppose $\Omega\subseteq X$ and $F:X\rightarrow \R$ are convex and $\G$-invariant. Let $G:X\rightarrow X$ be arbitrary. Consider the variational inequality problem VI$(G,\Omega,F)$ on $X$: Find $x^*\in \Omega$ such that
$$ \langle G(x^*),x-x^*\rangle +F(x)-F(x^*)\geq 0\quad\mbox{for all}\,\,x\in \Omega.$$
If $a$ solves VI$(G,\Omega,F)$, then $a$ operator commutes with $-G(a)$.
\end{theorem}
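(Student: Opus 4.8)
The plan is to reduce the variational inequality to the minimization problem already handled by Theorem \ref{normal decomposition system theorem}, exactly mirroring the proof of Theorem \ref{vi} in the Euclidean Jordan algebra setting. First I would assume $a$ solves VI$(G,\Omega,F)$, so that
$$\langle G(a),x-a\rangle + F(x) - F(a) \geq 0 \quad\text{for all}\,\,x\in\Omega.$$
Rearranging this gives $\langle G(a),x\rangle + F(x) \geq \langle G(a),a\rangle + F(a)$ for all $x\in\Omega$, which says precisely that $a$ is a (global, hence local) minimizer of $\Theta(x) + F(x)$ over $\Omega$, where $\Theta(x) := \langle G(a),x\rangle$.

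The next step is to check the hypotheses of Theorem \ref{normal decomposition system theorem}. The set $\Omega$ and the function $F$ are convex and $\G$-invariant by assumption. The function $\Theta(x) = \langle G(a),x\rangle$ is linear, hence Fr\'echet differentiable, with $\Theta^{\prime}(a) = G(a)$. Applying Theorem \ref{normal decomposition system theorem} to this $\Theta$, we conclude that $-\Theta^{\prime}(a) = -G(a)$ and $a$ commute in $(X,\G,\gamma)$. Since commuting in $(X,\G,\gamma)$ is (by the remark following Proposition \ref{prop: lim et al}, in the Jordan algebra case) at least as strong as operator commutativity, we obtain that $a$ operator commutes with $-G(a)$, which is the claim. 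Strictly speaking, in the abstract normal decomposition system setting there is no ``operator commutativity'' notion; the statement should be read as: $a$ and $-G(a)$ commute in $(X,\G,\gamma)$.

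I do not expect any genuine obstacle here: the entire content is the observation that a solution of the variational inequality is a minimizer of a linearized objective, after which Theorem \ref{normal decomposition system theorem} applies verbatim. The only point requiring a word of care is the sign: because the conclusion of Theorem \ref{normal decomposition system theorem} involves $-\Theta^{\prime}(a)$ rather than $\Theta^{\prime}(a)$ (as Remark 7 emphasizes, this sign cannot be dropped), the corresponding statement for the variational inequality must be phrased in terms of $-G(a)$ rather than $G(a)$, in contrast to Theorem \ref{vi}. This asymmetry with the Jordan-algebra case is expected, since the proof in the normal decomposition system uses the one-sided convexity/optimality inequality $\langle\Theta^{\prime}(a),Aa-a\rangle\geq 0$ together with the max-representation in Proposition \ref{prop: lewis}, which is inherently one-directional.

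\begin{proof}
Suppose $a$ solves VI$(G,\Omega,F)$. Then
$$\langle G(a),x-a\rangle + F(x) - F(a) \geq 0 \quad\text{for all}\,\,x\in\Omega,$$
which rearranges to
$$\langle G(a),x\rangle + F(x) \geq \langle G(a),a\rangle + F(a) \quad\text{for all}\,\,x\in\Omega.$$
Hence $a$ minimizes $\Theta(x) + F(x)$ over $\Omega$, where $\Theta(x) := \langle G(a),x\rangle$. Since $\Theta$ is linear (in particular Fr\'echet differentiable) with $\Theta^{\prime}(a) = G(a)$, and $\Omega$, $F$ are convex and $\G$-invariant, Theorem \ref{normal decomposition system theorem} applies and yields that $-\Theta^{\prime}(a) = -G(a)$ and $a$ commute in $(X,\G,\gamma)$.
\end{proof}
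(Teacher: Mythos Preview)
Your proof is correct and follows exactly the approach the paper intends: the paper states Theorem~\ref{vi for nds} as the direct analog of Theorem~\ref{vi} without writing out a separate proof, and your argument (rewrite the variational inequality so that $a$ minimizes $\langle G(a),x\rangle + F(x)$ over $\Omega$, then invoke Theorem~\ref{normal decomposition system theorem}) is precisely that analog. Your observation that ``operator commutes'' in the statement should be read as ``commutes in $(X,\G,\gamma)$'' is also correct and worth noting.
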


When $\Omega=K$ is a closed convex cone and $F=0$, we write CP$(G,K)$ for VI$(G,\Omega,F)$.
\begin{corollary}\label{cp for nds}
Suppose  $K$ is closed convex cone in $X$ that is $\G$-invariant and $G:\V\rightarrow \V$ be arbitrary. 
If $a$ solves the cone complementarity problem CP$(G,K)$, then $a$ commutes with $-G(a)$.
\end{corollary}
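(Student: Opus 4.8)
The plan is to reduce Corollary \ref{cp for nds} to Theorem \ref{vi for nds} by recognizing the cone complementarity problem as a special case of the variational inequality problem, exactly mirroring the argument that took Corollary \ref{cp} from Theorem \ref{vi} in the Euclidean Jordan algebra setting. First I would observe that a closed convex cone $K$ that is $\G$-invariant, together with $F = 0$ (which is trivially convex and $\G$-invariant), satisfies the hypotheses of Theorem \ref{vi for nds}. So it suffices to show that $a$ solves $\mathrm{CP}(G,K)$ if and only if $a$ solves $\mathrm{VI}(G,K,0)$, and then invoke Theorem \ref{vi for nds} directly to conclude that $a$ operator commutes with $-G(a)$.

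The key step is the standard equivalence between the complementarity problem over a cone and the variational inequality over that cone. Suppose $a$ solves $\mathrm{VI}(G,K,0)$, i.e. $a \in K$ and $\langle G(a), x - a \rangle \geq 0$ for all $x \in K$. Taking $x = 2a \in K$ and $x = 0 \in K$ (both of which lie in $K$ since $K$ is a cone) gives $\langle G(a), a \rangle \geq 0$ and $\langle G(a), a \rangle \leq 0$, hence $\langle G(a), a \rangle = 0$; then the inequality reduces to $\langle G(a), x \rangle \geq 0$ for all $x \in K$, which says $G(a) \in K^*$. Thus $a$ solves $\mathrm{CP}(G,K)$. Conversely, if $a$ solves $\mathrm{CP}(G,K)$, then for any $x \in K$ we have $\langle G(a), x - a \rangle = \langle G(a), x \rangle - \langle G(a), a \rangle = \langle G(a), x \rangle \geq 0$ since $G(a) \in K^*$ and $x \in K$; so $a$ solves $\mathrm{VI}(G,K,0)$. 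This establishes the equivalence.

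With the equivalence in hand, the corollary follows immediately: $a$ solving $\mathrm{CP}(G,K)$ means $a$ solves $\mathrm{VI}(G,K,0)$, and Theorem \ref{vi for nds} (applied with $\Omega = K$, $F = 0$) yields that $a$ operator commutes with $-G(a)$ in $(X,\G,\gamma)$.

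I do not expect any real obstacle here; the proof is routine once Theorem \ref{vi for nds} is available. The only point requiring the slightest care is making sure the cone hypotheses are used correctly, namely that $0 \in K$ and $2a \in K$ (or more generally $ta \in K$ for $t \geq 0$) when extracting the complementarity condition $\langle G(a), a \rangle = 0$ from the variational inequality — this is precisely where the cone structure, as opposed to a general convex set, is needed. One should also note in passing the typo in the statement, where $G : \V \to \V$ should read $G : X \to X$; this does not affect the argument.
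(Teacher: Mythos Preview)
Your proposal is correct and follows exactly the paper's approach: the paper simply remarks that when $\Omega=K$ is a closed convex cone and $F=0$, CP$(G,K)$ is VI$(G,\Omega,F)$, and states the corollary as an immediate consequence of Theorem \ref{vi for nds} without further proof. Your write-up supplies the standard CP/VI equivalence details that the paper leaves implicit, but the route is the same.
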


\begin{remarks}{\bf (8)} The above corollary yields the  following:
{\it Suppose $K$ is a closed convex cone in $X$ that is
$\G$-invariant. If $a\in K$ and $b\in K^*$ satisfy $\langle a,b\rangle=0$, then $a$ and $-b$  commute. }
\\
{\bf (9)} We specialize the above remark to essentially simple algebras. Let $\V$ be such an algebra and let 
$K$ be a spectral cone (which is a closed convex cone that is spectral) in $\V$. If $a\in K$ and $b\in K^*$ satisfy $\langle a,b\rangle=0$, then there exists a Jordan frame 
$\{f_1,f_2,\ldots, f_n\}$ such that
$$a=\sum_{1}^{n} \lambda_{i}(a) f_i,\quad b = \sum_{1}^{n} \lambda_{n+1-i}(b) f_i,\quad\mbox{and}\quad \sum_{1}^{n} \lambda_{i}(a)\, \lambda_{n+1-i}(b)=0.$$
This comes from (\ref{simultaneous diagonal representation}) by noting $-\lambda_i(-b)=\lambda_{n+1-i}(b)$ and $\langle a,b\rangle=0.$\\
{\bf (10)} Another consequence of Remark {\bf (8)} is the following: Suppose
 $(X,\G,\gamma)$ is a normal decomposition system where 
$$\langle \gamma(x),\gamma(y)\rangle=0\Rightarrow x=0\,\,\mbox{or}\,\,y=0.$$
(We note that $M_{m,n}$ and the system considered in Remark {\bf (7)} are such systems.)  {\it If $K$ is a closed convex cone in $X$  
that is $\G$-invariant, then $K=\{0\}$ or $X$.} This can be seen as follows. Suppose $K$ is different from $\{0\}$ and $X$. Let $a$ be a nonzero element in the boundary of $K$. By an application of the supporting hyperplane theorem, we can find a nonzero $b\in K^*$ such that
$\langle a,b\rangle=0$. By Remark {\bf (8)}, $a$ and $-b$ commute, hence, $a=A\gamma(a)$, $-b=A\gamma(-b)$ for some
$A\in \G$. Then, $\langle \gamma(a),\gamma(-b)\rangle=\langle a,-b\rangle=0.$ It follows that $a=0$ or $b=0$ leading to a contradiction.  
\end{remarks}

\section {Appendix}
Here, we describe a result on the 
 automorphism group of a  Euclidean Jordan algebra which is written as a product of simple algebras.
While this result can be deduced from Theorem VI.18 in \cite{koecher}, for completeness, we present a direct (perhaps, elementary) proof. Using this result, we show that a Euclidean Jordan algebra $\V$ is essentially simple if and only if every weakly spectral set in $\V$ is spectral.\\
 
Consider a (general) Euclidean Jordan algebra  $\V$.  We assume that $\V$ is product of 
distinct non-isomorphic simple algebras $\V_1,\V_2,\ldots, \V_m$ (with possible repetitions). Regrouping the factors, we assume that
\begin{equation}\label{V as a product of Vis}
\V=\Big (\V_1\times \V_1\times \cdots\times \V_1\Big )\times \Big ( \V_2\times \V_2\times \cdots\times \V_2\Big )\times \cdots \times \Big (\V_m\times \V_m\times \cdots\times \V_m\Big ).
\end{equation}
By letting $\W_i:=\V_i\times \V_i\times \cdots\times \V_i$, we  write 
\begin{equation}\label{V in terms of Ws}
\V=\W_1\times \W_2\times \cdots\times \W_m.
\end{equation}

\begin{theorem}\label{theorem on automorphisms}
$$\Aut(\V)=\Aut(\W_1)\times \Aut(\W_2)\times \cdots\times \Aut(\W_m).$$ 
Moreover, any automorphism $\phi$ of $\Aut(\W_i)$ has the following form:
$$\phi=\Big (\phi_1,\phi_2,\ldots,\phi_k\Big )\circ \sigma,$$
where $k$ is the number of factors in $\W_i$, $\phi_j\in \Aut(\V_i)$, $j=1,2,\ldots,k$ and $\sigma$ is a $k\times k$ 
permutation matrix. 
\end{theorem}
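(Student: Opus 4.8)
The plan is to prove the two assertions in turn, leaning on the product structure \eqref{V in terms of Ws} and the fact that the simple factors $\V_1,\dots,\V_m$ are pairwise non-isomorphic. For the first claim, $\Aut(\V)=\Aut(\W_1)\times\cdots\times\Aut(\W_m)$, one inclusion is already noted in the preliminaries. For the reverse, suppose $\phi\in\Aut(\V)$. The key structural fact is that the simple ideals of $\V$ are intrinsic: each $\W_i$ is the sum of all simple ideals of $\V$ isomorphic to $\V_i$, and an automorphism must permute the simple ideals while preserving isomorphism type. Since distinct $\W_i$'s involve non-isomorphic simple pieces, $\phi$ cannot send a factor of $\W_i$ into $\W_j$ for $i\neq j$; hence $\phi(\W_i)=\W_i$ for each $i$, and $\phi$ restricts to an automorphism of each $\W_i$. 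This gives $\phi=(\phi|_{\W_1},\dots,\phi|_{\W_m})$ with $\phi|_{\W_i}\in\Aut(\W_i)$, i.e.\ the desired equality.

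For the second claim, fix $i$ and write $\W_i=\V_i^{\,k}$ ($k$ copies). Let $\psi\in\Aut(\W_i)$. The idea is to identify how $\psi$ acts on the $k$ simple ideals $\{0\}\times\cdots\times\V_i\times\cdots\times\{0\}$. Because $\V_i$ is simple, these are precisely the minimal nonzero ideals of $\W_i$, so $\psi$ permutes them by some permutation $\sigma\in S_k$; that is, $\psi$ maps the $j$-th summand isomorphically onto the $\sigma(j)$-th summand. Composing $\psi$ with the permutation automorphism $\sigma^{-1}$ (realized as the $k\times k$ permutation matrix acting by reshuffling coordinates) yields an automorphism of $\W_i$ that fixes each simple ideal setwise, hence is block-diagonal: it is of the form $(\phi_1,\dots,\phi_k)$ with each $\phi_j$ an automorphism of $\V_i$ (an isomorphism of a simple algebra onto itself). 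Unwinding, $\psi=(\phi_1,\dots,\phi_k)\circ\sigma$, as claimed.

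The step that needs the most care — and the main obstacle — is the claim that an automorphism of $\V$ must preserve the decomposition into simple ideals \emph{respecting isomorphism types}, i.e.\ $\phi(\W_i)=\W_i$. One must argue that (i) the minimal nonzero ideals of $\V$ are exactly the simple factors appearing in \eqref{V as a product of Vis}, (ii) $\phi$ sends minimal ideals to minimal ideals (clear, since $\phi$ is a linear isomorphism preserving the Jordan product, hence preserves the lattice of ideals), and (iii) $\phi$ restricted to a minimal ideal $I$ is a Jordan-algebra isomorphism onto its image, so $I$ and $\phi(I)$ are isomorphic simple algebras; since the $\V_i$ are pairwise non-isomorphic, $\phi$ must permute the copies of each fixed $\V_i$ among themselves, giving $\phi(\W_i)=\W_i$. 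The only subtlety is point (i): one should check that an ideal of a product of simple algebras is a sub-product, so the minimal ones are precisely the individual simple factors; this is a standard consequence of simplicity of each $\V_i$ together with the orthogonality of the factors under the Jordan product, and it can be dispatched quickly. Everything else is bookkeeping with permutation matrices.
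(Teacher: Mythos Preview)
Your proposal is correct and takes a genuinely different route from the paper. The paper argues by writing an automorphism $A$ as a block matrix $[A_{ij}]$ of linear maps between the simple factors, proves two lemmas showing that each $A_{ij}$ is a Jordan homomorphism and that distinct entries in the same row satisfy $A_{ik}^{\T}A_{il}=0$, and then runs a somewhat intricate combinatorial argument (two Claims, plus a dimension cascade) to force the off-diagonal blocks ${\bf B}_{kl}$ between distinct $\W_k,\W_l$ to vanish. Your argument bypasses all of this by working intrinsically with ideals: the simple factors are exactly the minimal nonzero ideals of $\V$, any automorphism permutes these while preserving isomorphism type, and since the $\V_i$ are pairwise non-isomorphic the isotypic components $\W_i$ are fixed setwise. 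The second part then falls out by the same reasoning inside each $\W_i$.

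What each approach buys: the paper's block-matrix argument is self-contained at the level of linear algebra and homomorphisms, never invoking the phrase ``minimal ideal'' or the lattice of ideals; it trades conceptual cleanliness for an explicit, if longer, computation. Your ideal-theoretic argument is shorter and more transparent, and it is the standard one for semisimple algebras in general; its only cost is that you must justify that every ideal of a finite product of simple Euclidean Jordan algebras is a sub-product (equivalently, that the simple decomposition is unique and exhausts the minimal ideals). You flag this correctly as the one point needing care; it is indeed routine, and in fact the uniqueness of the simple decomposition is already quoted in the paper's preliminaries from Faraut--Kor\'{a}nyi, so you are on solid ground.
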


Note: The explicit form of the automorphism $\phi$ written with a permutation $\sigma$ is:
$$\phi(x)=\Big ( \phi_1(x_{\sigma(1)}),\phi_2(x_{\sigma(2)}),\ldots,\phi_k(x_{\sigma(k)})\Big )\,\,\mbox{for all}\,\,x=(x_1,x_2,\ldots,x_k)\in \V_i\times \V_i\times \cdots\times \V_i.$$

Before giving a proof, we present two lemmas. In what follows, we write $\dim(X)$ for the dimension of a space $X$.

\begin{lemma}\label{homo lemma}
Suppose that $\V$ and $\W$ are simple Euclidean Jordan algebras and  $A:\V\rightarrow \W$ is a non-zero Jordan homomorphism. Then:
\begin{itemize}
\item [(i)] $\dim(\V)\leq \dim(\W)$. 
\item [(ii)]  If $\dim(\V)= \dim(\W)$, $A$ is an isomorphism.
\item [(iii)] If $\dim(\V)< \dim(\W)$, then zero is the only homomorphism from $\W$ to $\V$.
\end{itemize}
\end{lemma}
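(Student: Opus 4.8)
\textbf{Proof proposal for Lemma \ref{homo lemma}.}

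The plan is to reduce everything to two facts about simple Euclidean Jordan algebras: first, that the image of a nonzero Jordan homomorphism is a subalgebra sharing the unit (after adjusting), and second, that a nonzero Jordan homomorphism between simple algebras is necessarily injective. For the injectivity claim I would argue that the kernel of a Jordan homomorphism $A : \V \to \W$ is an ideal of $\V$ (if $Ax = 0$ then $A(x \circ y) = Ax \circ Ay = 0$ for all $y$); since $\V$ is simple, the kernel is either $\{0\}$ or all of $\V$, and the latter is excluded because $A \neq 0$. Hence $A$ is one-to-one, which immediately gives $\dim(\V) \le \dim(\W)$, proving (i).

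For (ii), if $\dim(\V) = \dim(\W)$ then the injective linear map $A$ is automatically onto, hence a linear isomorphism that preserves Jordan products, i.e., a Jordan algebra isomorphism. One subtlety: a homomorphism need not a priori send the unit $e_\V$ to the unit $e_\W$; but $A(e_\V)$ is an idempotent in $\W$ (since $A(e_\V) = A(e_\V \circ e_\V) = A(e_\V)^2$), and when $A$ is bijective it must be the unit of $\W$, because $A(e_\V) \circ Ax = A(e_\V \circ x) = Ax$ for every $x$, so $A(e_\V)$ acts as identity on all of $\W = A(\V)$. So no extra work is needed in the equidimensional case.

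For (iii), suppose $\dim(\V) < \dim(\W)$ and let $B : \W \to \V$ be a nonzero homomorphism. By part (i) applied to $B$ we would get $\dim(\W) \le \dim(\V)$, contradicting the strict inequality; hence $B = 0$. This is the quickest route, though I should double-check that parts (i)--(ii) are genuinely symmetric in $\V$ and $\W$ — they are, since the argument for (i) used only simplicity of the \emph{source} algebra and made no use of which algebra is which.

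The main obstacle I anticipate is not in the logical skeleton above but in making sure the ``kernel is an ideal'' step is airtight: one must recall that an ideal of a Jordan algebra is a subspace $I$ with $\V \circ I \subseteq I$, and verify $\ker A$ satisfies this, which is the one-line computation shown above. A secondary point worth stating explicitly is that simple Euclidean Jordan algebras have no nontrivial ideals — this is exactly the definition of simplicity recalled in Section 2 — so the reduction is legitimate. Everything else is routine linear algebra. I would keep the write-up to a short paragraph per item, citing the classification only if a cleaner dimension-counting argument is wanted as an alternative, but the ideal argument avoids needing the list of five simple algebras at all.
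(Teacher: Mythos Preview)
Your proposal is correct and follows essentially the same route as the paper: the kernel of $A$ is an ideal, simplicity of $\V$ forces injectivity, and the rest is linear algebra plus applying (i) with the roles of $\V$ and $\W$ reversed. The paper's version is terser and omits your remarks about the unit element and the explicit verification that $\ker A$ is an ideal, but the underlying argument is identical.
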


\begin{proof} $(i)$ Since the kernel of a  homomorphism is an ideal of $\V$ and $\V$  is simple, we see that either $A$ is zero or one-to-one. Since our $A$ is nonzero, its kernel is $\{0\}$; hence it is one-to-one and so
 $\dim(\V)\leq \dim(\W)$.\\ 
$(ii)$ When $\dim(\V)= \dim(\W)$, this $A$ is also onto; hence it is an isomorphism. \\ 
$(iii)$ Assume $\dim(\V)< \dim(\W)$. If there is a nonzero Jordan homomorphism from $\W$ to $\V$, by $(i)$, $\dim(\W)\leq \dim(\V)$.
This is a contradiction. 
\end{proof}

\begin{lemma}\label{individual homos}
Consider a product Euclidean Jordan algebra $\Z=\Z_1\times \Z_2\cdots\times \Z_m$. Let $A:\Z\rightarrow \Z$ be a  linear transformation
 written in the matrix form $A=[\,A_{ij}]$, where $A_{ij}:\Z_j\rightarrow \Z_i$ is linear. If $A$ is a Jordan homomorphism, then 
so is  $A_{ij}$ for any $i,j$. Furthermore,   $A_{ik}^{\T}\,A_{il}=0$ for all $i$ and $k\neq l$.
\end{lemma}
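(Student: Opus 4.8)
The plan is to write out the Jordan homomorphism identity $A(x\circ y)=Ax\circ Ay$ coordinate by coordinate and then specialize to test elements supported on one or two coordinates. Writing $x=(x_1,\dots,x_m)$ and $y=(y_1,\dots,y_m)$ and using that the Jordan product on $\Z$ is componentwise, the $i$-th coordinate of the identity reads
\[
\sum_{j} A_{ij}(x_j\circ y_j)=\Big(\sum_{k} A_{ik}x_k\Big)\circ\Big(\sum_{l} A_{il}y_l\Big)=\sum_{k,\,l}(A_{ik}x_k)\circ(A_{il}y_l),
\]
valid for every $x,y\in\Z$ and every index $i$. Everything follows by choosing $x$ and $y$ appropriately.

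First I would fix $j$ and take both $x$ and $y$ with all coordinates zero except the $j$-th; then the displayed identity collapses to $A_{ij}(x_j\circ y_j)=(A_{ij}x_j)\circ(A_{ij}y_j)$ for all $x_j,y_j\in\Z_j$, which says precisely that $A_{ij}:\Z_j\to\Z_i$ is a Jordan homomorphism. Next, to obtain the orthogonality relations, I would fix $i$ and indices $k\neq l$, and take $x$ supported on coordinate $k$ and $y$ supported on coordinate $l$: every term $x_j\circ y_j$ on the left now vanishes because the supports are disjoint, while on the right only the $(k,l)$ summand survives. This gives $(A_{ik}u)\circ(A_{il}v)=0$ for all $u\in\Z_k$ and $v\in\Z_l$.

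It remains to convert $(A_{ik}u)\circ(A_{il}v)=0$ into the operator identity $A_{ik}^{\T}A_{il}=0$. Taking adjoints with respect to the direct-sum inner product on $\Z$ (for which $(A^{\T})_{ij}=(A_{ji})^{\T}$), this amounts to showing $\ip{A_{ik}u}{A_{il}v}=0$ in $\Z_i$ for all $u,v$. Here I would invoke the defining compatibility of the inner product of the Euclidean Jordan algebra $\Z_i$ with its Jordan product, namely $\ip{p\circ q}{r}=\ip{q}{p\circ r}$ for all $p,q,r\in\Z_i$. Taking $r$ to be the unit element of $\Z_i$ and using $p\circ r=p$, this yields $\ip{p}{q}=\ip{p\circ r}{q}=\ip{r}{p\circ q}$, so that $p\circ q=0$ forces $\ip{p}{q}=0$. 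Applying this with $p=A_{ik}u$ and $q=A_{il}v$ completes the argument.

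I do not anticipate a genuine obstacle; the only step that uses more than elementary linear algebra is the last one, where ``$p\circ q=0$'' must be upgraded to ``$p$ and $q$ are orthogonal'' --- this is precisely where the Euclidean (i.e.\ associative) inner product axiom of a Jordan algebra is needed, and without it the block-orthogonality conclusion would be false.
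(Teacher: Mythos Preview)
Your argument is correct and follows essentially the same route as the paper's proof: specialize $A(x\circ y)=Ax\circ Ay$ to elements supported on a single coordinate to get that each $A_{ij}$ is a homomorphism, then to elements supported on distinct coordinates $k\neq l$ to get $(A_{ik}u)\circ(A_{il}v)=0$, and finally pass to $\ip{A_{ik}u}{A_{il}v}=0$. The only difference is cosmetic: the paper leaves the last implication implicit, whereas you justify it explicitly via $\ip{p}{q}=\ip{e}{p\circ q}$ using the associativity of the inner product and the unit element.
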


\begin{proof} For $x,y\in \Z$, we have $A(x\circ y)=Ax\circ Ay.$ Taking $x=(0,\ldots,0,x_j,0,\ldots,0)^\T$ and 
$y=(0,\ldots,0,y_j,0,\ldots,0)^\T$, we get, for any $i,j$, $A_{ij}(x_j\circ y_j)=A_{ij}x_j\circ A_{ij}y_j$. This proves that $A_{ij}$ is a homomorphism. Now suppose $k\neq l$ and let $x=(0,\ldots,0,x_k,0,\ldots,0)^\T$ and $y=(0,\ldots,0,y_l,0,\ldots,0)^\T$. Then $A(x\circ y)=Ax\circ Ay$ yields, $0=A_{ik}x_k\circ A_{il}y_l$. This leads to $\langle A_{ik}x_k, A_{il}y_l\rangle=0$ and to $\langle x_k, A_{ik}^{\T}\,A_{il}\,y_l\rangle=0$. As $x_k$ and $y_l$ are arbitrary, we get $A_{ik}^{\T}\,A_{il}=0$.
\end{proof}

\noindent{\bf Proof of Theorem \ref{theorem on automorphisms}.}
We may assume without loss of generality that all algebras involved carry canonical inner products. Since $\Aut(\W_1)\times \Aut(\W_2)\times \cdots\times \Aut(\W_m)\subseteq \Aut(\V)$, it is enough to prove 
 the reverse inclusion.  As the result is obvious for $m=1$, we assume that $m\geq 2$. 
Let  $A\in \Aut(\V)$. Since we are given  $\V$ by (\ref{V as a product of Vis}) and (\ref{V in terms of Ws}),
we  think of $A$ as a matrix of linear transformations $A=[ A_{ij} ],$
where each $A_{ij}$ is a linear transformation from some $\V_k$ to $\V_l$. By partitioning this matrix, we  write 
$A=[{\bf B}_{kl}],$
where (each block) ${\bf B}_{kl}:\W_l\rightarrow \W_k$ is a linear transformation.  The main part of our 
proof consists in showing
\begin{equation}\label{main part}
{\bf B}_{1j}=0 \,\,\mbox{for all}\,\, j\geq 2.
\end{equation}
Once we establish this, the same argument can then be used  for $A^\T$ (which is the inverse of $A$ as we are using the canonical inner product). This results in ${\bf B}_{j1}=0$ for all $j\geq 2$. It then follows that  ${\bf B}_{11}\in \Aut(\W_1)$ and $A$ could be viewed as an element of $\Aut(\W_1)\times \Aut(\W_2\times\W_3\times\cdots\times\W_m).$ We  then invoke the induction principle to see that $A\in \Aut(\W_1)\times \Aut(\W_2)\times \cdots\times \Aut(\W_m).$
\\
Now towards proving (\ref{main part}), we make the following  claims:\\
\noindent{\bf Claim 1:} 
\begin{itemize}
\item [(a)] {\it If for some $k\neq l$, (the off-diagonal block) ${\bf B}_{kl}$ has a nonzero entry, then, $\dim(\V_l)<\dim(\V_k)$ and 
${\bf B}_{lk}=0$.}
\item [(b)] {\it If $A_{ij}$ is a nonzero entry in (a diagonal block) 
${\bf B}_{kk}$, then all other entries in the row/column of $A$ containing $A_{ij}$ are zero, that is, $A_{il}=0$ and $A_{li}=0$ for all $l\neq j$.
}
\end{itemize}
To see $(a)$, suppose $A_{ij}$ is a nonzero entry in ${\bf B}_{kl}$. Then,
$A_{ij}$ from $\V_l$ to $\V_k$ is a nonzero homomorphism (by Lemma \ref{individual homos}). As $\V_l$ and  $\V_k$ are simple and non-isomorphic, by Lemma \ref{homo lemma}, $\dim(\V_l)<\dim(\V_k).$  Lemma \ref{homo lemma} also
 shows that there cannot be a nonzero homomorphism from $\V_k$ to $\V_l$. Thus, every entry in ${\bf B}_{lk}$ is zero.\\
To see $(b)$,  suppose that $A_{ij}$ is a nonzero entry in a diagonal block
${\bf B}_{kk}$. Then, by Lemma \ref{individual homos}, $A_{ij}:\V_k\rightarrow \V_k$ is an isomorphism. From the same lemma,
for $l\neq j$, we have $A_{ij}^{\T}\,A_{il}=0$ and so $A_{il}=0$. Thus, in the row containing $A_{ij}$, all other entries are zero.
By working with the transpose of $A$, we see that the column containing $A_{ij}$ is zero except for the $A_{ij}$th entry. This proves the claim.
\\
\noindent{\bf Claim 2:}
{\it Suppose for some $l$ with $1\leq l\leq m-1$, ${\bf B}_{12},{\bf B}_{23},\ldots,{\bf B}_{l\,l+1}$ are nonzero. Then,
$l<m-1$ and there exists $j> l+1$ such that ${\bf B}_{l+1\,j}$ is nonzero.}\\ 
If this were not true, then either $l=m-1$ or $l<m-1$ and $B_{l+1\,j}=0$ for all $j> l+1$. From Claim 1(a),
$\dim(\V_{l+1})<\dim(\V_{l})<\cdots<\dim(\V_{1}).$ From Lemma 1(iii), ${\bf B}_{l+1\,1},{\bf B}_{l+1\,2},\ldots, {\bf B}_{l+1\,l}$ are all zero. This means that in the matrix with entries ${\bf B}_{ij}$, in the $l+1$ row, all entries except possibly ${\bf B}_{l+1\,l+1}$, are zero. As $A$ is invertible, this lone entry ${\bf B}_{l+1\,l+1}$ cannot be zero. In fact, no row in the matrix
 ${\bf B}_{l+1\,l+1}$ can be zero. By Claim 1(b), each row of ${\bf B}_{l+1\,l+1}$ contains exactly one nonzero entry. This implies that in the square matrix ${\bf B}_{l+1\,l+1}$, each column will also contain exactly one nonzero entry. By Claim 1(b), all columns of ${\bf B}_{l\,l+1}$ will  be zero. This  contradicts our assumption that ${\bf B}_{l\,l+1}$ is nonzero. This proves our claim. 
\\
Now suppose, if possible, (\ref{main part}) is false so that 
${\bf B}_{1j}\neq 0$ for some $j\geq 2$. We may assume, by relabeling, that $j=2$, so
${\bf B}_{12}$ is nonzero.
By Claim 2 (with $l=1$), $2<m$ and there exists $j>2$ such that ${\bf B}_{2j}$ is nonzero. Relabeling, we may assume that $j=3$ so that ${\bf B}_{23}$ is nonzero. We can use Claim 2 again, to see that ${\bf B}_{34}$ is nonzero, etc. Claim 2 allows us to repeat this process; however,  as  $m$ is finite, this cannot continue forever. Thus, we reach a contradiction. Hence, (\ref{main part}) holds and as discussed before, leads to the completion of the proof of the first part of the theorem.
\\We now come to the second part of the theorem. For simplicity, we let $i=1$. We need to describe the matrix $A$ which is now ${\bf B}_{11}$. As $A$ is invertible, each row of ${\bf B}_{11}$ is nonzero.  By Claim 1(b), each row of ${\bf B}_{11}$ contains exactly one nonzero entry which, by Lemma 1, is an automorphism of $\V_1$. (This means that  
 each column of ${\bf B}_{11}$ also has the same property.) Thus, ${\bf B}_{11}$ can be regarded as a permutation of a diagonal matrix of transformations where each diagonal entry is an automorphism of $\V_1$. This gives the stated assertion. 
$\hfill$ $\qed$

\gap

The following is immediate.

\begin{corollary} \label{corollary}
Suppose $\V=\V_1\times\V_2\times \cdots\times\V_m$, where  $\V_1,\ldots,\V_m$ are non-isomorphic simple algebras. Then,
$$\Aut(\V)=\Aut(\V_1)\times \Aut(\V_2)\times \cdots\times \Aut(\V_m).$$
\end{corollary}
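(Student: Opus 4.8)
The plan is to obtain this as the degenerate case of Theorem \ref{theorem on automorphisms} in which no simple factor is repeated. When $\V_1,\dots,\V_m$ are pairwise non-isomorphic, the regrouping in (\ref{V as a product of Vis}) is trivial: each block $\W_i=\V_i\times\cdots\times\V_i$ consists of a single copy of $\V_i$, so $\W_i=\V_i$ and the number $k$ of factors inside each $\W_i$ equals $1$.

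First I would apply the first assertion of Theorem \ref{theorem on automorphisms}, which yields
$$\Aut(\V)=\Aut(\W_1)\times\Aut(\W_2)\times\cdots\times\Aut(\W_m)=\Aut(\V_1)\times\Aut(\V_2)\times\cdots\times\Aut(\V_m),$$
where the second equality is just the identification $\W_i=\V_i$. The \emph{moreover} clause of Theorem \ref{theorem on automorphisms} contributes nothing extra here: with $k=1$ the only $1\times 1$ permutation matrix $\sigma$ is the identity, so every $\phi\in\Aut(\W_i)$ is simply an automorphism $\phi_1$ of $\V_i$, in agreement with the displayed factorization.

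I do not expect any genuine obstacle: the entire content lies in Theorem \ref{theorem on automorphisms}, and all that must be checked is that the hypotheses line up --- each $\V_i$ is simple (so the theorem applies) and the $\V_i$ are pairwise non-isomorphic (which is precisely what forces every block $\W_i$ to be a single factor). If one preferred a self-contained argument, one could instead rerun the reasoning behind Theorem \ref{theorem on automorphisms} directly: write $A\in\Aut(\V)$ in block form $A=[A_{ij}]$ with $A_{ij}\colon\V_j\to\V_i$, use Lemma \ref{individual homos} to see each $A_{ij}$ is a Jordan homomorphism with $A_{ik}^{\T}A_{il}=0$ for $k\neq l$, and then invoke Lemma \ref{homo lemma} together with invertibility of $A$ to conclude $A_{ij}=0$ for $i\neq j$ and $A_{ii}\in\Aut(\V_i)$ for each $i$; but quoting Theorem \ref{theorem on automorphisms} is the shorter route.
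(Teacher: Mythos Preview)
Your proposal is correct and matches the paper's approach exactly: the paper simply states ``The following is immediate,'' and your first paragraph spells out precisely why --- when the simple factors are pairwise non-isomorphic, each $\W_i$ reduces to the single factor $\V_i$, so the first assertion of Theorem \ref{theorem on automorphisms} gives the claimed product decomposition directly.
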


As an application of the above results, we prove the following.

\begin{theorem}\label{theorem on weakly spectral sets}
$\V$ is essentially simple if and only if every weakly spectral set in $\V$ is spectral.
\end{theorem}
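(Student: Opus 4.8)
The plan is to prove the two implications separately, using Proposition \ref{prop: jordan frame} for the forward direction and the description of $\Aut(\V)$ in Theorem \ref{theorem on automorphisms} for the reverse. For the forward direction, suppose $\V$ is essentially simple and let $E$ be weakly spectral. I would first show that $E$ has the spectral property (\ref{spectral property}): if $x\in E$, $y\in\V$, and $\lambda(y)=\lambda(x)$, write the spectral decompositions $x=\sum_i\lambda_i(x)e_i$ and $y=\sum_i\lambda_i(x)e_i'$, apply Proposition \ref{prop: jordan frame} to obtain $\phi\in\Aut(\V)$ with $\phi(e_i)=e_i'$ for all $i$, so that $\phi(x)=y$ and hence $y\in E$. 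Once (\ref{spectral property}) holds, $E$ is spectral: taking $Q$ to be the permutation-closure of $\lambda(E)$ (a permutation-invariant set in $\Rn$), one has $\lambda^{-1}(Q)=\lambda^{-1}(\lambda(E))=E$, so $E=\lambda^{-1}(Q)$.

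For the reverse direction I must exhibit, for every $\V$ that is not essentially simple, a weakly spectral set that is not spectral. Writing $\V=\W_1\times\cdots\times\W_m$ as in (\ref{V in terms of Ws}), with $\W_i=\V_i\times\cdots\times\V_i$ ($k_i$ copies of the simple algebra $\V_i$, the $\V_i$ pairwise non-isomorphic), the failure of essential simplicity forces either $m\ge 2$, or $m=1$ with $k_1\ge 2$ and $\V_1\ne\R$. In the case $m\ge 2$, put $\mathcal Y:=\W_2\times\cdots\times\W_m\ne\{0\}$; by Theorem \ref{theorem on automorphisms} every automorphism of $\V$ respects the splitting $\V=\W_1\times\mathcal Y$, so $E:=\W_1\times\{0\}$ is $\Aut(\V)$-invariant, i.e.\ weakly spectral. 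Choosing primitive idempotents $d\in\W_1$ and $c\in\mathcal Y$, the elements $(d,0)\in E$ and $(0,c)\notin E$ have the same eigenvalue vector (one entry $1$, the rest $0$), so $E$ violates (\ref{spectral property}) and is not spectral.

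In the remaining case $\V=\V_1\times\cdots\times\V_1$ ($k:=k_1\ge 2$ copies, $r:=\operatorname{rank}(\V_1)\ge 2$), I would take $E_0:=\{z\in\V_1:\lambda_1(z)=0\}$, which is a nontrivial spectral set in $\V_1$ (it is $\lambda^{-1}$ of the permutation-invariant set $\{\mu:\max_i\mu_i=0\}$), and set $E:=E_0\times\cdots\times E_0\subseteq\V$. Since by Theorem \ref{theorem on automorphisms} an automorphism of $\V$ only permutes the $k$ copies and applies automorphisms of $\V_1$ coordinatewise, and $E_0$ is $\Aut(\V_1)$-invariant, $E$ is weakly spectral. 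To break spectrality, let $c$ be an idempotent of rank $r-1$ in $\V_1$ and $x:=(-c,\dots,-c)\in E$, so that the eigenvalue vector of $x$ in $\V$ consists of exactly $k$ zeros together with $k(r-1)$ entries equal to $-1$; then, using the spectral theorem in each copy, I choose $y=(y_1,\dots,y_k)$ with eigenvalue lists $(0,0,-1,\dots,-1)$ for $y_1$ (two zeros, $r-2$ entries $-1$), $(0,-1,\dots,-1)$ for $y_j$ with $2\le j\le k-1$, and $(-1,\dots,-1)$ for $y_k$. A count of zeros and of $-1$'s shows that $y$ has the same eigenvalue vector as $x$, while $\lambda_1(y_k)=-1\ne 0$ gives $y\notin E$. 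Hence $E$ is weakly spectral but not spectral.

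I expect the last case to be the crux: when a simple algebra is repeated one cannot peel off a factor as in the case $m\ge 2$, and the naive candidates are in fact spectral (for instance $E_0'\times\cdots\times E_0'$ with $E_0'=\{z:\lambda_1(z)\le 0\}$ is just $-\V_+$, which is spectral). The resolution is to choose $E_0$ so that membership in $E_0\times\cdots\times E_0$ pins down how many zero eigenvalues occur in each block --- a constraint preserved by automorphisms but destroyed once the full eigenvalue multiset is redistributed across blocks.
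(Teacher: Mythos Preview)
Your proof is correct and follows the same overall strategy as the paper: cite/prove the forward direction via Proposition~\ref{prop: jordan frame}, and for the converse split into the cases $m\ge 2$ and $\V=\V_1^{k}$ with $k\ge 2$, $\operatorname{rank}(\V_1)\ge 2$, using Theorem~\ref{theorem on automorphisms} to certify weak spectrality of a product set and then exhibiting two elements with the same eigenvalue vector lying on opposite sides of the set.

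The only differences are in the specific witness sets. In Case~1 the paper uses $\mathcal{P}_1\times\{0\}\times\cdots\times\{0\}$ (primitive idempotents in $\W_1$), whereas you use the larger set $\W_1\times\{0\}$; both fail spectrality via the same pair of primitive idempotents. In Case~2 the paper takes $\mathcal{P}_1\times\cdots\times\mathcal{P}_1$ and must split into subcases $m\le n$ and $n<m$ to build the companion element $y$, while your choice $E_0^{k}$ with $E_0=\{z:\lambda_1(z)=0\}$ lets you redistribute one zero from block $k$ to block $1$ uniformly, avoiding that case split. Either set works; yours is slightly more streamlined.
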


\begin{proof}
The `only if' part has been observed in \cite{jeong-gowda}, Theorem 2. We prove the `if' part. Suppose, if possible, $\V$ is not essentially simple; let $\V$  be given by (\ref{V as a product of Vis}) and (\ref{V in terms of Ws}). We consider two cases:\\
\noindent{\bf Case 1}: $\V=\W_1\times \W_2\times \cdots\times \W_m$, $m\geq 2$. For $i=1,2,\ldots, m$, let 
 rank$(\W_i)=n_i$, ${\cal P}_i$ denote the set of all primitive idempotents in $\W_i$, and $0$ denote the zero element in any $\W_i$. Since automorphisms map primitive idempotents to primitive idempotents,  ${\cal P}_1$  is invariant under automorphisms of $\W_1$, and so, by Theorem \ref{theorem on automorphisms},  
$\Omega:={\cal P}_1\times \{0\}\times \{0\}\ldots\times \{0\}$ is weakly spectral in $\V$.
Let $c_1\in {\cal P}_1$, $c_2\in {\cal P}_2$, 
$$x=\Big (c_1,0,0,\ldots,0\Big )\quad\mbox{and}\quad y= \Big (0,c_2,0,0\ldots,0\Big ).$$
 As both $x$ and $y$ have eigenvalues $1$ (appearing once) and $0$ (appearing $n_1+n_2+\ldots+n_m-1$ times), we see that
 $\lambda(x)=\lambda(y)$. However, $x\in \Omega$ while $y\not\in \Omega$. Thus, $\Omega$ cannot be of the form $\lambda^{-1}(Q)$ for any (permutation invariant) set $Q$.\\
\noindent{\bf Case 2}: $\V=\W_1=\V_1\times \V_1\times \cdots\times \V_1$, where $\V_1$ is a simple algebra of rank at least 2 and the number of factors in this product, say, $m$, is more than one.  Let $n=rank(\V_1)$. In $\R^n$, let $s_i$ denote the coordinate vector containing $1$ in its $i$th slot and zeros elsewhere; let $Q=\{s_1,s_2,\ldots, s_n\}$. As $Q$ is permutation invariant, 
the set ${\cal P}_1:=\lambda^{-1}(Q)$ (which equals the set of all primitive elements in $\V_1$) 
is a spectral set in $\V_1$, where $\lambda:\V_1\rightarrow \Rn$ is the eigenvalue map. As ${\cal P}_1$ is invariant under automorphisms of $\V_1$, an application of  Theorem \ref{theorem on automorphisms} shows that 
the set  $\Omega:={\cal P}_1\times {\cal P}_1\times \cdots\times {\cal P}_1$ is weakly spectral  
in $\V$. We now claim that $\Omega$ is not a spectral set in $\V$. Let $e$ denote the unit vector in $\V_1$ and 
$\{e_1,e_2\ldots, e_n\}$ be a Jordan frame in $\V_1$. Then, $\lambda(e_1)=(1,0,0,\ldots,0)^\T$ and so the vector $x:=(e_1,e_1,\ldots,e_1)$ in $\Omega$ has eigenvalues $1$ (repeated $m$ times) and $0$ (repeated $m(n-1)$ times). When $m\leq n$, let 
$y:=\Big ( e_1+e_2+\cdots+e_m, 0,0,\dots,0\Big )\in \V$. We see that $y\not\in \Omega$ while $\lambda(y)=\lambda(x)$. On the other hand, when $n<m$, we write $m=nk+l$ with $0\leq l<n$ and define $y:=\Big (e,e,\ldots,e,e_1+e_2+\cdots+e_l,0,0\ldots,0\Big )$, where $e$ is repeated $k$ times. We see that $y\not\in \Omega$ while $\lambda(y)=\lambda(x)$. Thus, $\Omega$ is not a spectral set. This completes the proof. 
\end{proof}


\end{document}